\documentclass[12pt,final]{article}
\usepackage{amsmath,amsthm,amsfonts,amssymb,graphicx,enumerate,psfrag}
\usepackage{mathrsfs}
\usepackage{fullpage}

\usepackage[colorlinks,citecolor=blue,urlcolor=blue]{hyperref}
\usepackage[utf8]{inputenc} 
\newtheorem{lemma}{Lemma}
\newtheorem{assume}{Assumption}
\newtheorem{theorem}{Theorem}

\newtheorem{corollary}{Corollary}

\newcommand{\dN}{\mathbb {N}}
\newcommand{\dZ}{\mathbb {Z}}
\newcommand{\dR}{\mathbb {R}}

\newcommand{\dist}{\mathrm{d}}
\newcommand{\cS}{{\mathbb{S}}}
\newcommand{\EE}{{\mathbb{E}}}
\newcommand{\MM}{{\mathbb{M}}}

\newcommand{\mm}{{\mathrm{m}}}

\newcommand{\PP}{{\mathbb{P}}}

\newcommand{\fS}{\mathfrak{S}}
\newcommand{\cP}{\mathcal{P}}

\newcommand{\cH}{\mathcal{H}}
\newcommand{\cc}{\kappa}
\newcommand{\cX}{\mathcal {X}}
\newcommand{\dtv}{{\rm d}_{\textsc{tv}}}

\newcommand{\lip}{\mathrm{Lip}}

\title{Entropy and curvature: beyond the Peres-Tetali conjecture}
\author{Pietro Caputo, Florentin M\"unch and Justin Salez}
\begin{document}
\maketitle
\begin{abstract}
We study Markov chains with non-negative sectional curvature on finite metric spaces. Neither reversibility, nor the restriction to a particular combinatorial distance are imposed. In this level of generality, we prove that a 1-step contraction in the Wasserstein distance implies a 1-step contraction in relative entropy, by the same amount. Our result substantially strengthens a recent breakthrough of the second author, and  has the advantage of being applicable to arbitrary scales. This leads to a time-varying refinement of the standard Modified Log-Sobolev Inequality (MLSI), which allows us to leverage the well-acknowledged fact that \emph{curvature improves at large scales}. We illustrate this principle with several applications, including birth and death chains, colored exclusion processes,  permutation walks, Gibbs samplers for high-temperature spin systems, and attractive zero-range dynamics. In particular, we prove a MLSI with constant equal to the minimal rate increment for the mean-field zero-range process, thereby answering  a long-standing question. 
\end{abstract}
\section{Introduction}
\label{sec:intro}
\paragraph{Geometric contraction.} Throughout the paper, we fix a finite metric space $(\cX,\dist)$ and an irreducible stochastic matrix $P$ on $\cX$. We write $\cP(\cX)$ for the set of probability measures on $\cX$, and equip it with the Wasserstein distance $W(\mu,\nu)  :=  \min_{X\sim\mu,Y\sim \nu}\EE[\dist(X,Y)]$. Following Ollivier \cite{ollivier2009ricci,ollivier2010survey}, we define the \emph{curvature} of $P$ as  the largest number $\cc=\cc(P)\in\dR$ such that 
\begin{eqnarray}
\label{GI}
\forall \mu,\nu\in\cP(\cX),\qquad W(\mu P,\nu P) & \le & (1-\cc)W(\mu,\nu).
\end{eqnarray}
 By the convexity of $W(\cdot,\cdot)$, it is in fact enough to verify this inequality when $\mu$ and $\nu$ are Dirac masses, a task which is simple enough to yield sharp lower bounds on the curvature of many concrete Markov chains. When positive, such lower bounds have been shown to provide systematic control on a number of essential quantitative features of the chain, including geometry \cite{jost2019Liouville,munch2019non, munch2023ollivier}, mixing times \cite{646111,MR2316551}, expansion \cite{salez2021sparse, Muench2023,salez2023spectral}, concentration of measure \cite{MR2683634,joulin2007poisson,eldan2017transport}, spectral independence \cite{MR4415182} and even the cutoff phenomenon \cite{salez2021cutoff}.
 
\paragraph{Entropic contraction.} The purpose of the present paper is to investigate the relation between the \emph{geometric} contraction (\ref{GI}) and its natural \emph{entropic} counterpart:
\begin{eqnarray}
\label{EI}
\forall \mu\in\cP(\cX),\qquad H(\mu P\,|\,\pi) & \le & (1-\cc)H(\mu \,|\,\pi),
\end{eqnarray}
where $\pi$ is the unique invariant law of $P$, and where
$
H(\mu\,|\,\pi) :=  \sum_{x}\mu(x)\log(\mu(x)/\pi(x))
$
denotes the relative entropy (or Kullback-Leibler divergence) of $\mu$ with respect $\pi$. As explained in the lecture notes \cite{entropy}, this discrete-time entropic contraction is stronger than the classical \emph{Modified Log-Sobolev Inequality} (MLSI) of \cite{MR2120475}, which applies to the continuous-time Markov semi-group $(P_t)_{t\ge 0}$ generated by $P-{\rm I}$ and asserts that
\begin{eqnarray}
\label{MLSI}
\forall \mu\in\cP(\cX),\qquad\forall t\ge 0,\qquad H(\mu P_t\,|\,\pi) & \le & e^{-\cc t}\, H(\mu\,|\,\pi).
\end{eqnarray}
Let us state our main result straightaway, and provide additional motivation afterwards.

\paragraph{Main result.} We let $P^\star$ denote the adjoint of $P$ in the Hilbert space $L^2(\cX,\pi)$.  Following \cite{ollivier2010survey,MR4149342,pedrotti2023contractive,munch2023ollivier} (see also \cite{MR4688313} for a related notion), we make  the following structural assumption, which will be shown to hold in many important examples  in Section \ref{sec:applications}. 
\begin{assume}[Non-negative sectional curvature]\label{assume:sectional}For each pair of states $(x,y)\in \cX^2$, there is a coupling $(X^\star,Y^\star)$ of  $P^\star(x,\cdot)$ and $P^\star(y,\cdot)$ such that almost-surely,
\begin{eqnarray*}
\dist(X^\star,Y^\star) & \le & \dist(x,y).
\end{eqnarray*}
\end{assume}
\begin{theorem}[Main result]\label{th:main}Under Assumption \ref{assume:sectional}, the geometric contraction (\ref{GI}) implies the entropic contraction (\ref{EI}) with the same constant $\cc$. \end{theorem}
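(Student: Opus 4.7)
The plan is to rephrase (\ref{EI}) in terms of the density $f:=\frac{d\mu}{d\pi}$. Since $\frac{d(\mu P)}{d\pi}=P^\star f$ and $H(\mu\,|\,\pi)=\EE_\pi[\phi(f)]$ with $\phi(u)=u\log u$, the target becomes the functional inequality
\[
\EE_\pi\bigl[\phi(P^\star f)\bigr]\;\le\;(1-\cc)\,\EE_\pi\bigl[\phi(f)\bigr].
\]
Jensen's inequality applied to the convex function $\phi$ immediately delivers the trivial data-processing bound with constant $1$ in place of $(1-\cc)$, so the entire game is to extract the factor $\cc$ from (\ref{GI}) together with Assumption~\ref{assume:sectional}.

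To gain access to coupling techniques, I would linearise via the exponential interpolation $f_s:=f^s/\EE_\pi[f^s]$, $s\in[0,1]$, joining the constant density $f_0\equiv 1$ to $f_1=f$. Both $s\mapsto\EE_\pi[\phi(f_s)]$ and $s\mapsto\EE_\pi[\phi(P^\star f_s)]$ are smooth and vanish at $s=0$, so writing each as an integral in $s$ reduces the target to an $s$-wise comparison of the corresponding derivatives with ratio $(1-\cc)$. These derivatives admit explicit expressions in terms of $f_s$, $\log f_s$ and $P^\star f_s$ that are much more amenable to coupling analysis than the one-step entropies themselves.

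The core estimate would then combine two ingredients. First, the Kantorovich-Rubinstein dual of (\ref{GI}) yields $\lip(Pg)\le(1-\cc)\lip(g)$ for every $g\colon\cX\to\dR$. Second, the pathwise coupling $(X^\star,Y^\star)$ of Assumption~\ref{assume:sectional} provides the pointwise identity
\[
P^\star f_s(y)-P^\star f_s(y')\;=\;\EE\bigl[f_s(X^\star)-f_s(Y^\star)\bigr],
\]
which transfers Lipschitz-level contractions on the $P$-side into pointwise comparisons of $P^\star f_s$ at neighbouring states. Invoking the Donsker-Varadhan variational formula on the left-hand-side derivative and replacing each witness test function by a Lipschitz surrogate built from the sectional coupling should then give the slice-by-slice comparison, and hence (\ref{EI}), with the sharp constant.

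The main obstacle I expect is precisely this surrogate construction: Donsker-Varadhan witnesses need not be Lipschitz, and a naive Lipschitz regularisation generically costs a multiplicative constant that would dilute the factor $\cc$. The fact that Assumption~\ref{assume:sectional} provides an \emph{almost sure}, rather than merely in-expectation, contraction is what should allow a monotone rearrangement along the coupled trajectories at zero entropic cost, so that the regularised surrogate has the same $\EE_\pi[e^{(\cdot)}]$-budget as the original witness. Carrying this through with the \emph{sharp} constant $(1-\cc)$, rather than some $(1-\cc)^{1-\varepsilon}$ variant, is the delicate heart of the argument, and is likely where the paper improves upon the second author's recent breakthrough.
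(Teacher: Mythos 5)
Your proposal is a genuinely different route from the paper's, but it is not a proof: you have outlined a strategy and then honestly flagged that its central step — the Lipschitz regularisation of Donsker--Varadhan witnesses with unit cost — is missing, and I do not see how to carry it out. Two concrete obstructions.

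First, the reduction from the one-step inequality to an $s$-wise comparison of derivatives along the exponential interpolation $f_s=f^s/\EE_\pi[f^s]$ is a strict strengthening of the target, not an equivalent reformulation. Integrating a pointwise derivative inequality does give the integrated one, but there is no reason the pointwise inequality should hold for this particular interpolation; (\ref{EI}) is perfectly compatible with the two derivative curves crossing. At $s=0$ both entropies vanish to second order, so the leading behaviour of the ratio of derivatives is the $L^2(\pi)$-contraction factor of $P^{\star}$, which need not be bounded by $1-\cc(P)$ even under Assumption~\ref{assume:sectional}. (Indeed in the paper's proof, the $\mu\to\pi$ regime requires a separate argument — its Lemma~\ref{lm:linear} — which bounds the second-order ratio by $1-\cc(PP^{\star})$ and then factorises; that step would be lost in your slice-by-slice scheme.)

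Second, and more fundamentally, the pointwise identity $P^\star f_s(y)-P^\star f_s(y')=\EE\bigl[f_s(X^\star)-f_s(Y^\star)\bigr]$ is linearity of $P^\star$ and holds for any coupling; what the almost-sure contraction $\dist(X^\star,Y^\star)\le\dist(y,y')$ buys is not a control on $\lip(P^\star f_s)$, but the \emph{multiplicative} contraction $\lip(\log P^\star f_s)\le\lip(\log f_s)$ (an exponential/expectation/logarithm sandwich, the paper's Lemma~\ref{lm:lip}(ii)). A Donsker--Varadhan witness $g$ for $H(\mu_s P\,|\,\pi)$ enters through $\EE_\pi[e^{g}]$ and $\int g\,d(\mu_s P)$, and the sectional coupling acts on the $\mu_s P=(P^\star f_s)\pi$ side; but there is no mechanism by which the almost-sure bound converts an arbitrary witness $g$ into a Lipschitz one at zero $\log\EE_\pi[e^{g}]$-cost. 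Your own closing paragraph is an accurate self-diagnosis: this is where the plan breaks, and I do not believe a ``monotone rearrangement along coupled trajectories'' resolves it.

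The paper's route avoids both pitfalls by not interpolating at all. It studies the ratio $\cH(\mu)=H(\mu P|\pi)/H(\mu|\pi)$ directly, extracts a first-order condition at a maximizer — the functional equation $P\log P^\star f=(1-\alpha)\log f$ — and then applies two Lipschitz-norm facts: $\lip(Ph)\le(1-\cc)\lip(h)$ (Kantorovich--Rubinstein) and $\lip(\log P^\star f)\le\lip(\log f)$ (the dual form of Assumption~\ref{assume:sectional}). Hitting the Euler--Lagrange identity with $\lip(\cdot)$ and cancelling $\lip(\log f)>0$ gives $1-\alpha\le 1-\cc(P)$ in one line. The residual work is the compactness/degeneracy bookkeeping: the maximizing sequence may collapse onto $\pi$ (handled by the $L^2$ analysis and the sub-multiplicativity $\cc(PP^\star)\le\cc(P)+\cc(P^\star)-\cc(P)\cc(P^\star)$), and $P$ may have zero entries (handled by perturbing $P$ to $(1-\varepsilon)P+\varepsilon\pi$). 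Your sketch contains none of these ideas; if you want to pursue an interpolation-based proof, you would at minimum need to abandon the slice-by-slice requirement and replace Donsker--Varadhan by a mechanism that genuinely exploits the logarithmic Lipschitz contraction, which is the actual content of Assumption~\ref{assume:sectional}.
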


\paragraph{Relation to previous works.}   The far-reaching possibility that the Ollivier-Ricci curvature might, under appropriate assumptions, be powerful enough to control the rate of exponential decay of the relative entropy 
emerged in the community at the beginning of the present millennium. It  became informally known as the \emph{Peres-Tetali conjecture}, and was investigated by several authors \cite{eldan2017transport,MR4415182,liu2021coupling,munch2023ollivier}, see also \cite{hermon2018characterization,conforti2022probabilistic} for related work on establishing relative entropy decay via probabilistic techniques. In particular, a breakthrough was very recently made by the second author, who managed to deduce the MLSI (\ref{MLSI}) from the geometric contraction (\ref{GI}) and Assumption \ref{assume:sectional},  under the additional restrictions that $P$ is reversible and that the underlying metric is the  combinatorial distance $\dist(x,y)=\min\{n\in\dN\colon P^n(x,y)>0\}$, see  \cite[Theorem 4.4]{munch2023ollivier}. Our Theorem \ref{th:main} strengthens this result in three important ways:
\begin{enumerate}
\item We crucially improve the MLSI (\ref{MLSI}) to the $1-$step entropic contraction (\ref{EI}). 
\item Our metric $\dist$ is arbitrary, thereby considerably broadening the scope of Assumption \ref{assume:sectional}.
\item We do not require the reversibility condition $P^\star=P$. 
\end{enumerate}
The practical interest of each of those improvements will be demonstrated in Section \ref{sec:applications}, where Theorem \ref{th:main} is applied to several important Markov chains. The main motivation for our work was the observation that many natural examples satisfying Assumption \ref{assume:sectional} are actually \emph{flat} in the sense that $\cc(P)=0$, making the MLSI (\ref{MLSI}) useless. In contrast, our stronger conclusion (\ref{EI}) has the advantage of being applicable to arbitrary scales,  thereby allowing us to leverage the long-acknowledged fact that \emph{curvature improves at large scales}.  More precisely, applying Theorem \ref{th:main} to $P_t=e^{t(P-\rm{I})}$ (which also satisfies Assumption \ref{assume:sectional}) instead of $P$ readily yields
\begin{eqnarray}
\label{timevarying}
\forall \mu\in\cP(\cX),\qquad\forall t\ge 0,\qquad H(\mu P_t\,|\,\pi) & \le &\left(1-\cc(P_t)\right)\, H(\mu\,|\,\pi).
\end{eqnarray}
This is always at least as good as the uniform estimate (\ref{MLSI}), since 
$1-\cc(P_t)  \le  e^{-t\cc(P)}$. In fact, the sub-multiplicativity of the function $t\mapsto 1-\cc(P_t)$ ensures that our estimate can only improve as $t$ increases. The benefit can be considerable, as we will see in Section \ref{sec:applications}. 

\paragraph{Other curvature notions.} In recent years, there has been increasing interest in discrete Ricci curvature notions. Indeed, many approaches can be traced back to the 80s.
The Wasserstein contraction which Ollivier famously interpreted as Ricci curvature, was already used by Dobrushin and Shlosman in \cite{dobrushin1985constructive}, and is also known as Dobrushin-Shlosman criterion.
A fundamentally different approach to discrete curvature was taken by Forman who used a discrete Bochner-Weitzenböck decomposition to establish a Ricci curvature notion on cell complexes \cite{forman2003bochner}. Surprisingly, the Ollivier curvature coincides with the Forman curvature when choosing the two-cells optimally \cite{jost2021characterizations,tee2021enhanced}.

Many results in Riemannian geometry critically depend on the dimension. In the discrete counterpart, no meaningful dimension parameter could be found for the Ollivier curvature. 
However, the Bochner formula together with the Bakry-\'Emery calculus \cite{bakry2006diffusions} provides a framework in which a dimension parameter can be naturally introduced in the discrete setting. This lead to the discrete Bakry-\'Emery curvature, introduced independently thrice in \cite{elworthy1991manifolds,schmuckenschlager1998curvature,
lin2010ricci}.
In order to prove Li-Yau and log-Sobolev inequalities, various non-linear modifications of Bakry-\'Emery curvature were introduced in \cite{bauer2015li,munch2014li,Dier2021,weber2021entropy,
erbar2012ricci,MR4688313}, motivated by a lack of a discrete Laplacian chain rule.
Bakry-\'Emery curvature was recently lifted to cell complexes in \cite{munch2024intertwining}.

A new version of entropic curvature was introduced by Rapaport and Samson in 
\cite{rapaport2023criteria}, including local criteria. This however, does not coincide with the entropic curvature by Erbar and Maas \cite{erbar2012ricci}.
Entropic curvature has been proven to be a powerful tool to investigate mixing of interacting particle systems \cite{erbar2018poincare,MR3513606,
erbar2017ricci,erbar2019entropic}, despite the fact that entropic curvature, as a non-linear optimization problem, is hard to compute explicitly.

One key difference between Ollivier curvature and Bakry-\'Emery and entropic curvature lies in the implicit gradient and distance notions. For Ollivier curvature, there is the freedom to choose an arbitrary distance. For Bakry-\'Emery and entropic curvature in contrast, the gradient is implicitly determined by the Markov chain.

Further non-local curvature notions which are easy to compute have been introduced in \cite{devriendt2022discrete, steinerberger2023curvature}.
However, not much theory has been developed yet.

\paragraph{Local reduction.} In order to conclude that a given matrix $P$ satisfies the entropic contraction (\ref{EI}), Theorem \ref{th:main}  requires us to provide, for each pair of states $(x,y)\in\cX^2$:
\begin{enumerate} 
\item[(i)]a coupling $(X,Y)$ of $P(x,\cdot)$ and  $P(y,\cdot)$ such that
$
\EE\left[\dist(X,Y)\right] \le(1-\cc)\dist(x,y);
$
\item[(ii)] a coupling $(X^\star,Y^\star)$ of $P^\star(x,\cdot)$ and  $P^\star(y,\cdot)$ such that
$
\PP\left(\dist(X^\star,Y^\star) \le \dist(x,y)\right) =  1.
$
\end{enumerate}
By virtue of the  so-called \emph{Gluing Lemma} (see, e.g., \cite[Lemma 7.6]{MR1964483}), we may in fact restrict this double task to pairs $(x,y)$ in a subset ${\mathcal S}\subseteq\cX^2$ which  \emph{generates} the metric $\dist$ in the following sense: for each $(x,y)\in\cX^2$, we can write
\begin{eqnarray}
\label{geodesic}
\dist(x,y) & = & \sum_{i=1}^n\dist(x_{i-1},x_i),
\end{eqnarray}
for some $n\in\dN$ and some sequence $(x_0,\ldots,x_n)\in\cX^{n+1}$ such that $x_0=x$, $x_n=y$ and $(x_{i-1},x_i)\in\mathcal S$ for $1\le i\le n$.  Of course, the trivial choice $\mathcal S=\cX^2$ always meets this requirement, but we will see in Section \ref{sec:applications} that many natural metrics are actually generated by much smaller sets, making this reduction quite useful in practice. 
\paragraph{Optimizing the metric.} Let us close this short introduction with an interesting question, inspired by the recent work \cite{salez2023spectral}. Observe that the geometric contraction (\ref{GI}) depends on the underlying metric $\dist$, whereas the entropic contraction (\ref{EI}) does not. This asymmetry can  be turned to one's advantage by treating the metric $\dist$ as a variable which one can try to fine-tune so as to optimize the resulting constant. More precisely, our result shows that \emph{any} irreducible stochastic matrix $P$ satisfies the entropic contraction  (\ref{EI}) with constant
\begin{eqnarray*}
\cc_\star(P) & := & \sup_{\dist\in{\MM(P)}} \cc(P,\dist),
\end{eqnarray*}
where ${\MM}(P)$ is the set of all metrics on $\cX$ under which Assumption \ref{assume:sectional} holds, and  where $\cc(P,\dist)$ denotes the curvature of $P$ with respect to the metric $\dist$. 
In view of the recent work \cite{salez2023spectral} on monotone chains, it is natural to ask for an effective characterization of  $\cc_\star$, at least under appropriate structural assumptions on $P$. Interestingly, the set $\MM(P)$ always contains the trivial distance $\dist(x,y):={\bf 1}_{(x\ne y)}$, in which case the Wasserstein distance coincides with the total-variation distance $\dtv\left(\cdot,\cdot\right)$, so that $\kappa_\star(P)$ is well-defined and non-negative. Moreover, when specialized to this crude metric, our main result has the following immediate consequence.
\begin{corollary}\label{cor:dbarcor}
For any Markov semi-group $(P_t)_{t\ge 0}$ on any finite state space $\cX$, for any initial law $\mu\in\cP(\cX)$,  any time $t\ge 0$,  
\begin{eqnarray}
\label{universal}
H(\mu P_t\,|\,\pi) & \le & \overline{\rm d}(t)\,H(\mu\,|\,\pi),\quad \textrm{where}\quad 
\overline{\rm d}(t)\ :=\ \max_{x,y\in\cX}\dtv\left(P_t(x,\cdot),P_t(y,\cdot)\right).
\end{eqnarray}
\end{corollary}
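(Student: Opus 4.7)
The plan is to apply Theorem~\ref{th:main} to the stochastic matrix $P_t$, for each fixed $t\ge 0$, with respect to the crude metric $\dist(x,y):=\mathbf{1}_{(x\ne y)}$. This particular choice is designed precisely so that both hypotheses of the main theorem collapse to trivial verifications. First I would check Assumption~\ref{assume:sectional} for $P_t^\star$: since $\dist$ takes values in $\{0,1\}$, \emph{any} coupling $(X^\star,Y^\star)$ of $P_t^\star(x,\cdot)$ and $P_t^\star(y,\cdot)$ automatically satisfies $\dist(X^\star,Y^\star)\le 1$. When $x\ne y$ we have $\dist(x,y)=1$ and the inequality is vacuous; when $x=y$ the diagonal coupling achieves equality. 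So the assumption holds for every semi-group and every $t\ge 0$ under the trivial metric.

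It then remains to identify the Wasserstein contraction constant $\cc(P_t)$ of $P_t$ relative to $\dist$. Because the Wasserstein distance induced by the trivial metric coincides with total variation, and because by convexity of $W$ it suffices to test (\ref{GI}) on Dirac masses, for any two distinct states $x,y$ one has $W(\delta_x,\delta_y)=1$ and $W(\delta_xP_t,\delta_yP_t)=\dtv(P_t(x,\cdot),P_t(y,\cdot))$, so that the sharp constant is
\begin{equation*}
\cc(P_t) \;=\; 1 - \max_{x,y\in\cX}\dtv\bigl(P_t(x,\cdot),P_t(y,\cdot)\bigr) \;=\; 1-\overline{\rm d}(t).
\end{equation*}
Feeding this back into Theorem~\ref{th:main} applied to $P_t$ yields $H(\mu P_t\,|\,\pi)\le(1-\cc(P_t))H(\mu\,|\,\pi)=\overline{\rm d}(t)\,H(\mu\,|\,\pi)$, which is the stated bound. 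The edge case $t=0$ is vacuous since $\overline{\rm d}(0)=1$, and for $t>0$ irreducibility of $P_t$ (in the sense required by the main theorem) follows from that of the underlying semi-group.

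I do not anticipate any substantive obstacle here: the corollary is really a bookkeeping specialisation of Theorem~\ref{th:main} to the trivial distance, and the paragraph immediately preceding the statement already signals this as "an immediate consequence". The only point worth emphasising in the write-up is that the flexibility in the choice of $\dist$ granted by Theorem~\ref{th:main} is what makes this universal bound possible: it would be invisible from the viewpoint of any fixed combinatorial distance.
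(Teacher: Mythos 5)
Your proof is correct and follows exactly the route the paper intends: specialize Theorem~\ref{th:main} to $P_t$ with the trivial metric $\dist(x,y)=\mathbf{1}_{(x\ne y)}$, under which Assumption~\ref{assume:sectional} is automatic and $\cc(P_t)=1-\overline{\rm d}(t)$ since Wasserstein distance reduces to total variation. The paper presents this as an immediate consequence without a separate proof, and your computation of the curvature constant and your handling of the edge cases ($t=0$, irreducibility of $P_t$ for $t>0$) are all accurate.
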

We point out that the total-variation distance $\overline{d}$ appearing in (\ref{universal}) is a classical and well-studied quantity in mixing-time theory (see, e.g., \cite[Section 4.4]{MR3726904}). Yet and perhaps surprisingly, its role as a universal entropy dissipation factor seems to be new. 

\section{Proof of the main result}
In this section we prove Theorem \ref{th:main}. 
As in many other applications of curvature, we shall actually work with the dual Kantorovich-Rubinstein  formulation, which we now recall.  Let
\begin{eqnarray}
\lip(f) & := & \sup\left\{\frac{\left|f(x)-f(y)\right|}{\dist(x,y)}\colon(x,y)\in\cX^2,x\ne y\right\},
\end{eqnarray}
denote the Lipschitz constant of a function $f\colon\cX\to\dR$. The first inequality in the following result constitutes a well-known characterization of the Ollivier curvature $\cc(P)$. The second inequality explicitly appears in \cite[Theorem 4.3]{munch2023ollivier} as a characterization of non-negative sectional curvature in the special case where the underlying metric $\dist$ is the combinatorial distance. Interestingly, this characterization turns out to fail in the more general setup that we consider here (we found an explicit $4\times 4$ matrix $P$ satisfying  (\ref{sectional}) but not Assumption \ref{assume:sectional}). Nevertheless, the direct implication remains valid, and this is all we actually need. 
\begin{lemma}[Dual formulations of curvature and sectional curvature]\label{lm:lip}\ 
\begin{enumerate}[(i)]
\item For any $f\colon\cX\to\dR$, we have
\begin{eqnarray}
\label{curvature}
\lip(Pf) & \le & \left(1-\cc(P)\right)\lip(f).
\end{eqnarray}
\item 
Under Assumption (\ref{assume:sectional}), we also have for any $f\colon\cX\to(0,\infty)$, 
\begin{eqnarray}
\label{sectional}
\lip\left(\log P^\star f\right) & \le & \lip(\log f).
\end{eqnarray}
\end{enumerate}
\end{lemma}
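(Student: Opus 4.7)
The plan is to prove the two parts separately, using Kantorovich--Rubinstein type dualities driven by the respective couplings.

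For part (i), I would invoke the standard Wasserstein duality argument. Fix $f\colon\cX\to\dR$ and a pair $(x,y)\in\cX^2$ with $x\ne y$, and let $(X,Y)$ be the optimal coupling of $P(x,\cdot)$ and $P(y,\cdot)$ afforded by (\ref{GI}). Then
\begin{eqnarray*}
Pf(x)-Pf(y) & = & \EE\bigl[f(X)-f(Y)\bigr]\ \le\ \lip(f)\,\EE\bigl[\dist(X,Y)\bigr]\ \le\ \lip(f)(1-\cc(P))\dist(x,y).
\end{eqnarray*}
Swapping the roles of $x$ and $y$ gives the matching lower bound, and dividing by $\dist(x,y)$ and taking the supremum yields (\ref{curvature}).

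For part (ii), the key trick is to convert the multiplicative Lipschitz bound on $f$ into an a.s.\ inequality that survives after taking $P^\star$-averages. Fix $f\colon\cX\to(0,\infty)$, set $L:=\lip(\log f)$, and for arbitrary $(x,y)\in\cX^2$ let $(X^\star,Y^\star)$ be the coupling provided by Assumption \ref{assume:sectional}. Since $\dist(X^\star,Y^\star)\le\dist(x,y)$ almost surely, we have
\begin{eqnarray*}
f(X^\star) & \le & e^{L\,\dist(X^\star,Y^\star)}f(Y^\star)\ \le\ e^{L\,\dist(x,y)}\,f(Y^\star)\qquad\textrm{a.s.}
\end{eqnarray*}
Taking expectations of both sides and identifying the marginals gives $P^\star f(x)\le e^{L\,\dist(x,y)}\,P^\star f(y)$. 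Taking logarithms, swapping the roles of $x$ and $y$, and dividing by $\dist(x,y)$ yields (\ref{sectional}).

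Neither step looks like it should present a real obstacle: part (i) is a textbook duality computation, and part (ii) hinges only on the simple but crucial observation that exponentiating a Lipschitz bound turns it into a pointwise multiplicative inequality which is preserved by averaging under any coupling that does not increase distances. The only thing to be careful about is that one does not try to prove the converse in (ii)---as the authors note, the equivalence fails in this generality---so the argument must use the full strength of the a.s.\ bound from Assumption \ref{assume:sectional} rather than merely an expected-distance bound.
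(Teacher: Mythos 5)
Your proposal is correct and follows essentially the same argument as the paper: part (i) is the standard coupling/duality computation, and part (ii) uses the almost-sure distance bound from Assumption \ref{assume:sectional} to obtain a pointwise multiplicative inequality on $f$, which is then preserved by taking $P^\star$-expectations. The only cosmetic difference is that the paper writes the key inequality in additive (logarithmic) form and then exponentiates, whereas you start directly from the exponentiated form; the two are identical.
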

\begin{proof}Fix a function $f\colon\cX\to\dR$ and two points $x,y\in\cX$. By definition, there is a coupling $(X,Y)$ of $P(x,\cdot)$ and $P(y,\cdot)$ such that $\EE[\dist(X,Y)]\le\left(1-\cc(P)\right)\dist(x,y)$. Since, $\EE[f(X)]=Pf(x)$ and $\EE[f(Y)]=Pf(y)$, we can then write
\begin{eqnarray*}
|Pf(x)-Pf(y)|  
& \le & \EE\left[|f(X)-f(Y)|\right]\\
& \le & \lip(f)\EE[\dist(X,Y)]\\
& \le & \left(1-\cc(P)\right)\lip(f)\dist(x,y).
\end{eqnarray*}
This  establishes the first claim. We now assume that $f$ is positive and that $(X^\star,Y^\star)$ is a  coupling of $P^\star(x,\cdot)$ and $P^\star(y,\cdot)$ such that $\dist(X^\star,Y^\star)\le\dist(x,y)$ almost-surely. Then, we have 
\begin{eqnarray*}
\log f(X^\star) & \le & \log f(Y^\star)+\dist(X^\star,Y^\star)\lip(\log f)\\
& \le &   \log f(Y^\star)+\dist(x,y)\lip(\log f). 
\end{eqnarray*} 
We now take exponentials, then expectations, and finally logarithms again to arrive at
\begin{eqnarray*}
\log P^\star f(x) & \le & \log P^\star f(y) + \dist(x,y)\lip(\log f). 
\end{eqnarray*} 
Since this is true for all $x,y\in\cX$, the second claim is proved. 
\end{proof}
We henceforth let $\alpha=\alpha(P)$ denote the optimal constant in the entropic contraction (\ref{EI}): 
\begin{eqnarray}
\label{def:alpha}
\alpha \ := \  1-\sup_{\mu\ne \pi}\cH(\mu), & \qquad\textrm{where}\qquad & \cH(\mu)\ :=\ \frac{H(\mu P\,|\,\pi)}{H(\mu\,|\,\pi)}.
\end{eqnarray}
Our starting point is the following simple observation about optimizers of $\cH$. 
\begin{lemma}\label{lm:optimizer}If a measure $\mu\in\cP(\cX)\setminus\{\pi\}$ maximizes $\cH$, then its density  $f:=\frac{\mu}{\pi}$ satisfies
\begin{eqnarray*}
(P \log P^\star f)(x) & = & (1-\alpha)\log f(x),
\end{eqnarray*}
for all $x\in\cX$, this identity being understood in $\dR\cup\{-\infty\}$.
\begin{proof}
Suppose that  $\mu\in\cP(\cX)\setminus\{\pi\}$ achieves the supremum of $\cH$, and write $f:=\frac{\mu}{\pi}$ for its density. Fix a point $x\in\cX$ and assume first that $\mu(x)>0$. Then the formula 
\begin{eqnarray}
\label{def:ftheta}
\mu_\theta & := & \frac{\mu+\theta \delta_x}{1+\theta},
\end{eqnarray}
defines a probability measure for all small enough $\theta\in\dR$, and an easy differentiation yields:
\begin{eqnarray}
\label{first}
\frac{1}{\theta}\left(H(\mu_\theta\,|\,\pi)-H(\mu\,|\,\pi)\right) & \xrightarrow[\theta\to 0]{} & \log f(x)-H(\mu\,|\,\pi);\\
\label{second}
\frac{1}{\theta}\left(H(\mu_\theta P\,|\,\pi)-H(\mu P\,|\,\pi)\right) & \xrightarrow[\theta\to 0]{} & (P\log P^\star f)(x)-H(\mu P\,|\,\pi).
\end{eqnarray}
Recalling that $\cH(\mu)=1-\alpha$, we easily deduce that 
\begin{eqnarray}
\label{taylor}
\frac{1}{\theta}\left(\cH(\mu_\theta) - \cH(\mu)\right) & \xrightarrow[\theta\to 0]{} & \frac{(P\log P^\star f)(x)-(1-\alpha)\log f(x)}{H(\mu\,|\,\pi)}.
\end{eqnarray}
Since $\cH$ is maximized at $\mu$, the right-hand side must vanish, yielding the desired identity. Let us now consider the degenerate case where $\mu(x)=0$. Then, in order for  (\ref{def:ftheta}) to define an element of $\cP(\cX)$, we need to restrict the parameter $\theta$ to non-negative values. Under this restriction, the convergences (\ref{first}) and (\ref{second}) hold in $\dR\cup\{-\infty\}$. If we had $(P\log P^\star f)(x)\in\dR$, then the convergence (\ref{taylor}) would still hold, but the limit would now be $+\infty$, contradicting the fact that $\cH$ is maximal at $\mu$. Thus, we must have $(P\log P^\star f)(x)=-\infty$, and the claimed identity holds with both sides being equal to $-\infty$.
\end{proof}
\end{lemma}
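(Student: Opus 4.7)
The plan is to view the claimed identity as the first-order optimality condition for $\mu$ as a maximizer of $\cH$ over $\cP(\cX)\setminus\{\pi\}$. For each fixed $x\in\cX$, I would introduce the one-parameter family of admissible perturbations $\mu_\theta := (\mu+\theta\delta_x)/(1+\theta)$, which lies in $\cP(\cX)$ for $\theta$ in a small two-sided neighborhood of $0$ if $\mu(x)>0$, and for $\theta$ in a small right-neighborhood of $0$ if $\mu(x)=0$. The normalization by $1+\theta$ is what makes $\mu_\theta$ automatically sum to $1$, without any ad hoc adjustment of the other coordinates, so this family is the most convenient vehicle for localizing the variational principle at the single point $x$.

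The core computation then consists in differentiating the numerator $H(\mu_\theta P\,|\,\pi)$ and denominator $H(\mu_\theta\,|\,\pi)$ of $\cH(\mu_\theta)$ at $\theta=0$. The denominator calculation is a standard relative-entropy derivative and yields $\log f(x)-H(\mu\,|\,\pi)$. For the numerator, the key structural observation is the adjoint identity
\begin{equation*}
\frac{\mu P}{\pi}\ =\ P^\star\!\left(\frac{\mu}{\pi}\right)\ =\ P^\star f,
\end{equation*}
which rewrites $\log((\mu P)(y)/\pi(y))=\log P^\star f(y)$ and, after integration against $P(x,\cdot)$, produces the quantity $(P\log P^\star f)(x)$. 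These two computations reproduce the stated limits (\ref{first}) and (\ref{second}); a routine quotient-rule expansion of $\cH(\mu_\theta)=H(\mu_\theta P\,|\,\pi)/H(\mu_\theta\,|\,\pi)$, using $\cH(\mu)=1-\alpha$, then gives precisely (\ref{taylor}).

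In the regular case $\mu(x)>0$, $\theta$ ranges over a two-sided neighborhood of $0$, so maximality of $\cH$ at $\mu$ forces the limit in (\ref{taylor}) to vanish; clearing the denominator $H(\mu\,|\,\pi)>0$ yields the claimed identity. The main obstacle—and the only genuinely delicate point—is the degenerate boundary case $\mu(x)=0$, where only $\theta\ge 0$ is admissible and where $\log f(x)=-\infty$, so that the limits in (\ref{first}) and (\ref{second}) must be interpreted in $\dR\cup\{-\infty\}$. My strategy here is an argument by contradiction: if $(P\log P^\star f)(x)$ were a finite real number, then the limit in (\ref{taylor}) would equal $+\infty$ (since $-(1-\alpha)\log f(x)=+\infty$), producing $\cH(\mu_\theta)>\cH(\mu)$ for all sufficiently small $\theta>0$ and contradicting maximality of $\mu$. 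Hence $(P\log P^\star f)(x)=-\infty$, and the asserted identity holds with both sides equal to $-\infty$.
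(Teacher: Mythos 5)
Your proposal is correct and coincides with the paper's own argument: the same perturbation family $\mu_\theta=(\mu+\theta\delta_x)/(1+\theta)$, the same two derivative computations (with the adjoint identity $\frac{\mu P}{\pi}=P^\star f$ underlying the second one), the same quotient-rule step leading to the first-order condition, and the same contradiction argument forcing $(P\log P^\star f)(x)=-\infty$ in the degenerate case $\mu(x)=0$. No gaps.
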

Our second ingredient is the following  result, which complements the above lemma by investigating the behavior of the functional $\mu\mapsto\cH(\mu)$ near the singularity point $\mu=\pi$. 
\begin{lemma}\label{lm:linear}Let $(\mu_n)_{n\ge 1}$ be elements of $\cP(\cX)\setminus\{\pi\}$ that converge to $\pi$. Then, 
\begin{eqnarray*}
\limsup_{n\to\infty}\cH(\mu_n) & \le & 1-\cc(PP^\star).
\end{eqnarray*}
\end{lemma}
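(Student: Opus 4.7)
The plan is to linearize the functional $\cH$ at its singular point $\mu=\pi$. Since $P$ is irreducible, $\pi>0$ on $\cX$, so for $n$ sufficiently large the density $f_n:=\mu_n/\pi$ is strictly positive and can be written as $f_n=1+\epsilon_n g_n$ with $\pi(g_n)=0$, $\|g_n\|_{L^2(\pi)}=1$, and $\epsilon_n\to 0$. I would then apply a second-order Taylor expansion of $u\mapsto u\log u$ around $u=1$ to obtain
\begin{eqnarray*}
H(\mu_n\,|\,\pi) & = & \frac{\epsilon_n^2}{2}\|g_n\|^2_{L^2(\pi)} + o(\epsilon_n^2)\ =\ \frac{\epsilon_n^2}{2}+o(\epsilon_n^2).
\end{eqnarray*}
The corresponding expansion for $\mu_nP$ exploits the identity $d(\mu_nP)/d\pi = P^\star f_n = 1+\epsilon_n P^\star g_n$, which is precisely how $P^\star$ enters the picture, and yields
\begin{eqnarray*}
H(\mu_n P\,|\,\pi) & = & \frac{\epsilon_n^2}{2}\|P^\star g_n\|^2_{L^2(\pi)} + o(\epsilon_n^2).
\end{eqnarray*}
Taking the ratio would give $\cH(\mu_n) = \|P^\star g_n\|^2_{L^2(\pi)}+o(1)$.

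Next, by finite-dimensional compactness I would extract a subsequence along which $g_n$ converges to some $g$ with $\pi(g)=0$ and $\|g\|_{L^2(\pi)}=1$. The limsup of $\cH(\mu_n)$ is then bounded by $\|P^\star g\|^2_{L^2(\pi)}=\langle g, PP^\star g\rangle_{L^2(\pi)}$, which in turn is at most the top non-trivial eigenvalue $\lambda_2(PP^\star)$ of the self-adjoint, positive semi-definite, stochastic operator $Q:=PP^\star$ acting on $L^2(\cX,\pi)$. This reduces the statement to the purely spectral claim $\lambda_2(Q)\le 1-\cc(Q)$.

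To close this last gap I would invoke part (i) of Lemma \ref{lm:lip} applied to $Q$: if $h$ is a real, non-constant eigenfunction of $Q$ with eigenvalue $\lambda_2(Q)$, then $\lip(h)>0$ and
\begin{eqnarray*}
\lambda_2(Q)\,\lip(h) & = & \lip(Qh)\ \le\ (1-\cc(Q))\,\lip(h),
\end{eqnarray*}
so $\lambda_2(Q)\le 1-\cc(Q)$. Chaining the estimates delivers $\limsup_n\cH(\mu_n)\le 1-\cc(PP^\star)$, as required. The main point demanding care is ensuring that the Taylor remainders are truly $o(\epsilon_n^2)$ \emph{uniformly} in $n$, but this should follow from the elementary bound $\|g_n\|_\infty\le(\min_x\pi(x))^{-1/2}$, which keeps both $f_n$ and $P^\star f_n$ in a shrinking neighborhood of $1$ and thus justifies the expansion of $u\log u$ on a fixed compact interval.
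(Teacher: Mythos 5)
Your argument is correct and is essentially the paper's own: a second-order Taylor expansion of $H(\cdot\,|\,\pi)$ near $\pi$ reduces $\cH$ to the Rayleigh quotient $\|P^\star h\|^2/\|h\|^2$ over centered $h$, which a spectral decomposition of the self-adjoint PSD operator $PP^\star$ bounds by $\lambda_2(PP^\star)$, and then the dual Lipschitz characterization (Lemma~\ref{lm:lip}(i), applied to $PP^\star$ with its second eigenfunction) gives $\lambda_2(PP^\star)\le 1-\cc(PP^\star)$. The only departure is stylistic: you normalize $h_n$ to a unit vector and extract a convergent subsequence, whereas the paper applies the Rayleigh-quotient bound $\|P^\star h\|^2\le\lambda_2\|h\|^2$ directly to $h_n$ for each $n$, making the compactness step superfluous.
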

\begin{proof}
Write $\mu_n=(1+h_n)\pi$, where $h_n\colon\cX\to\dR$ is a non-constant function with zero stationary mean, which vanishes as $n\to\infty$. Then, an easy use of the Taylor expansion $(1+\theta)\log(1+\theta)=\theta+\frac{1}{2}\theta^2+o(\theta^2)$ as $\theta\to 0$ gives the asymptotics
\begin{eqnarray*}
H(\mu_n\,|\,\pi) & \sim & \frac{1}{2}\|h_n\|^2;\\
H(\mu_n P\,|\,\pi) & \sim & \frac{1}{2}\left\|P^\star h_n\right\|^2,
\end{eqnarray*}
where $\|\cdot\|$ denotes the norm in the Hibert space $L^2(\cX,\pi)$, and where the notation $a_n\sim b_n$ means that $a_n/b_n\to 1$ as $n\to\infty$.
It follows that
\begin{eqnarray}
\label{taylor2}
\cH(\mu_n) & \sim & \frac{\left\|P^\star h_n\right\|^2}{\left\|h_n\right\|^2}.
\end{eqnarray}
Now, let $1=\lambda_1>\lambda_2\ge\ldots\ge \lambda_N\ge 0$ denote the  $N=|\cX|$ ordered eigenvalues of the non-negative self-adjoint operator $PP^\star$, and let $(\phi_1,\ldots,\phi_N)$ be a corresponding orthonormal eigenbasis with $\phi_1=1$. We can then write, for any $h\colon\cX\to\dR$,
\begin{eqnarray*}
\|P^\star h\|^2 \ = \ \langle h, PP^\star h\rangle & = & \sum_{i=1}^N\lambda_i\langle h,\phi_i\rangle^2,
\end{eqnarray*}
where $\langle \cdot,\cdot\rangle$ denotes the scalar product in $L^2(\cX,\pi)$.
Since $\langle h,\phi_1\rangle$ is exactly the stationary mean of $h$, we deduce that when $h$ is centered,
\begin{eqnarray*}
\|P^\star h\|^2 & = & \sum_{i=2}^N\lambda_i\langle h,\phi_i\rangle^2  \ 
 \le \ \lambda_2\sum_{i=2}^N\langle h,\phi_i\rangle^2 \ = \ \lambda_2\|h\|^2.
\end{eqnarray*}
This applies in particular to $h=h_n$, and inserting this into  (\ref{taylor2}) shows that 
\begin{eqnarray*}
\limsup_{n\to\infty}\cH(\mu_n) & \le & \lambda_2.
\end{eqnarray*}
To conclude, observe that $\lambda_2\le 1-\cc(PP^\star)$, as can be seen by choosing $f=\phi_2$ in the dual formulation of $\cc(PP^\star)$ (Lemma \ref{lm:lip} (i) applied to $PP^\star$ instead of $P$).
\end{proof}

We now have everything we need to prove our main result.
\begin{proof}[Proof of Theorem \ref{th:main}]Our goal is to prove that under Assumption \ref{assume:sectional},
\begin{eqnarray}
\label{goal}
\alpha  & \ge & \cc(P).
\end{eqnarray}
We first make the extra assumption that all entries of $P$ are positive.  By the very definition  (\ref{def:alpha}), there exists a sequence $(\mu_n)_{n\ge 1}$ in $\cP(\cX)\setminus\{\pi\}$ such that
\begin{eqnarray}
\label{supremum}
\cH(\mu_n) & \xrightarrow[n\to\infty]{} & 1-\alpha.
\end{eqnarray}
Since $\cX$ is finite,  we can safely assume -- upon extracting a subsequence if needed -- that $(\mu_n)_{n\ge 1}$ converges pointwise to a limit $\mu\in\cP(\cX)$. If $\mu=\pi$, then Lemma \ref{lm:linear} ensures that
\begin{eqnarray*}
1-\alpha & \le & 1-\cc(PP^\star)\\
& \le & \left(1-\cc(P)\right)\left(1-\cc(P^\star)\right).
\end{eqnarray*}
But Assumption \ref{assume:sectional} guarantees that $\cc(P^\star)\ge 0$, so (\ref{goal}) is proved. On the other hand, if $\mu\ne\pi$, then Lemma \ref{lm:optimizer} ensures that $f:=\frac{\mu}{\pi}$ must satisfy the functional equation
\begin{eqnarray}
\label{eq:maximizer}
P\log P^\star f & = & (1-\alpha)\log f.
\end{eqnarray}
Recall that this equality a priori holds in $\dR\cup\{-\infty\}$. However, since all entries of $P$ were assumed to be positive, the function $P^\star f$ is strictly positive. Thus, the left-hand side of (\ref{eq:maximizer}) is actually finite, and hence so is the right-hand side. In other words, $f$ is strictly positive, and we may therefore safely invoke Lemma \ref{lm:lip} to write
\begin{eqnarray*}
(1-\alpha)\lip(\log f) & = & \lip\left(P\log P^\star f\right)\\
& \le & \left(1-\cc(P)\right)\lip\left(\log P^\star f\right)\\
& \le & \left(1-\cc(P)\right)\lip(\log f).
\end{eqnarray*}
Since $f$ is non-constant ($\mu\ne\pi$), we may finally simplify through by $\lip(\log f)$ to obtain the desired conclusion. To handle the general case where some entries of $P$ may vanish, we introduce a perturbation parameter $\varepsilon\in(0,1)$ and replace each entry $P(x,y)$ with 
\begin{eqnarray*}
P_\varepsilon(x,y) & := & (1-\varepsilon)P(x,y)+\varepsilon\pi(y).
\end{eqnarray*}
The stochastic matrix $P_\varepsilon$ only has positive entries, and its stationary distribution is $\pi$. Moreover,  $\cc(P_\varepsilon)\ge\cc(P)$: indeed, given $x,y\in\cX$ and a coupling $(X,Y)$ of $P(x,\cdot)$ and $P(y,\cdot)$, we can construct a coupling $(X_\varepsilon,Y_\varepsilon)$ of $P_\varepsilon (x,\cdot)$ and $P_\varepsilon(y,\cdot)$ such that $\dist(X_\varepsilon,Y_\varepsilon)\le\dist(X,Y)$ by generating an independent pair $(Z,B)$ with $Z\sim\pi$ and $B\sim\textrm{Bernoulli}(\varepsilon)$ and setting
\begin{eqnarray*}
(X_\varepsilon,Y_\varepsilon) & := & 
\left\{
\begin{array}{ll}
(X,Y) & \textrm{if }B=0\\
(Z,Z) & \textrm{if }B=1.
\end{array}
\right.
\end{eqnarray*} 
The same argument  applies to the adjoint $P_\varepsilon^\star$ and shows that the latter inherits Assumption \ref{assume:sectional} from $P^\star$. Thus, the first part of our proof applies to the perturbed matrix $P_\varepsilon$ and allows us to conclude that $\alpha(P_\varepsilon)\ge\cc(P)$. This means that for each $\mu\in\cP(\cX)$, we have
\begin{eqnarray*}
H(\mu P_\varepsilon\,|\,\pi)& \le & \left(1-\cc(P)\right)H(\mu\,|\,\pi).
\end{eqnarray*}
We may finally send $\varepsilon\to 0$ to conclude.
\end{proof}
\section{Applications}
\label{sec:applications}
In this final section, we illustrate the strength of Theorem \ref{th:main} by establishing new entropy dissipation estimates for several important classes of Markov chains. 

\subsection{Birth and Death Processes}
We first take a look at the  case of Birth and Death Processes (BDP). Specifically, we set $\cX:=\{1,\ldots,n\}$ and consider the generator that acts on any function $f\colon\cX\to\dR$ as follows:
\begin{eqnarray}
\label{def:BDC}
Lf(x) & := & q_+(x)\left(f(x+1)-f(x)\right)+q_-(x)\left(f(x-1)-f(x)\right),
\end{eqnarray}
where $q_\pm$ are arbitrary positive functions on $\cX$, except that $q_-(1)=q_+(n)=0$. This generator is reversible with respect to the probability measure
\begin{eqnarray*}
\forall x\in\cX,\qquad \pi(x) & := & \frac{1}{C}\prod_{k=2}^{x}\frac{q_+(k-1)}{q_-(k)},
\end{eqnarray*}
where $C$ is a normalizing constant.
Let us now assume the following monotonicity:
\begin{eqnarray}
\label{assume:BDC}
\forall x\in\{1,\ldots,n-1\},\qquad q_+(x+1)\le q_+(x) & \textrm{ and } & q_-(x+1)\ge q_-(x).
\end{eqnarray}
This condition easily guarantees that  our BDP $(X_t)_{t\ge 0}$ starting from any $X_0\in\{1,\ldots,n-1\}$ can be coupled with a  BDP $(Y_t)_{t\ge 0}$ starting from $Y_0=X_0+1$ so that
\begin{eqnarray}
\label{BDC:sectional}
\forall t\ge 0,\qquad Y_t-X_t & \in & \{0,1\}.
\end{eqnarray}
This already shows that the underlying semi-group $(P_t)_{t\ge 0}$ has non-negative sectional curvature with respect to the metric $\dist\colon (x,y)\mapsto|y-x|$, which is generated by pairs of consecutive states. Moreover, the same coupling yields for all $t\ge 0$,
\begin{eqnarray}
1-\cc(P_t) & \le & \EE[\dist(X_t,Y_t)] \ = \ \EE[Y_t]-\EE[X_t].
\end{eqnarray}
Using the notation $\EE_x[\cdot]$ to indicate that the initial state is $x\in\cX$, we obtain:
\begin{corollary}\label{co:BDC}Under the condition (\ref{assume:BDC}), the semi-group $(P_t)_{t\ge 0}$ generated by (\ref{def:BDC}) satisfies
\begin{eqnarray*}
H(\mu P_t\,|\,\pi) & \le & {\mm}(t)\,H(\mu\,|\,\pi),\qquad \textrm{ where }\qquad \mm(t)\ :=\  \max_{1\le x<n}\left(\EE_{x+1}[X_t]-\EE_x[X_t]\right),
\end{eqnarray*}
for every initial law $\mu\in\cP(\cX)$ and every time $t\ge 0$.
\end{corollary}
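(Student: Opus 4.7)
The plan is to apply Theorem \ref{th:main} directly to $P_t$ (i.e.\ to invoke the time-varying refinement (\ref{timevarying})), and then to bound the factor $1-\cc(P_t)$ by $\mm(t)$ using the monotone coupling. Most of the work has already been carried out in the paragraphs preceding the statement; what remains is essentially to assemble the pieces.

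First, I would verify that Assumption \ref{assume:sectional} applies to $P_t$ with respect to the metric $\dist(x,y)=|y-x|$. Since $L$ is reversible, $P_t$ is self-adjoint, so $P_t^\star=P_t$. For any consecutive pair $(x,x+1)$, the coupling $(X_t,Y_t)$ constructed under (\ref{assume:BDC}) satisfies $Y_t-X_t\in\{0,1\}$ by (\ref{BDC:sectional}), hence $\dist(X_t,Y_t)\le 1=\dist(x,x+1)$ almost surely. Since the set $\cS$ of consecutive pairs generates the metric $\dist$ in the sense of (\ref{geodesic}), the Gluing-Lemma reduction described right after Theorem \ref{th:main} extends the sectional-curvature property from $\cS$ to arbitrary pairs.

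Next, I would use the same coupling to obtain a quantitative bound on $1-\cc(P_t)$. Again by the local reduction, applied this time to the Wasserstein contraction (\ref{GI}), it suffices to consider consecutive pairs, for which $\EE[\dist(X_t,Y_t)]=\EE[Y_t-X_t]=\EE_{x+1}[X_t]-\EE_x[X_t]$ thanks to the monotonicity $Y_t\ge X_t$. Taking the maximum over $1\le x<n$ yields $1-\cc(P_t)\le\mm(t)$. Plugging this into the entropic contraction provided by Theorem \ref{th:main} applied to $P_t$ gives $H(\mu P_t\,|\,\pi)\le(1-\cc(P_t))\,H(\mu\,|\,\pi)\le\mm(t)\,H(\mu\,|\,\pi)$, which is the desired conclusion.

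There is no genuine obstacle here: the monotone coupling has already been exhibited, the Gluing-Lemma reduction along consecutive pairs is recalled in the introduction, and Theorem \ref{th:main} does the heavy lifting of converting the Wasserstein contraction into an entropic one at each fixed time $t$. The only point requiring care is the double use of the local reduction (once for sectional curvature, once for the Wasserstein bound), both of which rely crucially on the fact that pairs of consecutive integers generate the metric.
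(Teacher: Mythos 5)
Your proposal is correct and follows essentially the same route as the paper: exhibit the monotone coupling guaranteed by (\ref{assume:BDC}) along consecutive pairs, use reversibility to reduce Assumption \ref{assume:sectional} to that same coupling, bound $1-\cc(P_t)$ by $\max_x(\EE_{x+1}[X_t]-\EE_x[X_t])$ via the Kantorovich reduction to generating pairs, and then invoke Theorem \ref{th:main} at each fixed time $t$ in the form of (\ref{timevarying}). The paper treats all of this as immediate from the displayed coupling property (\ref{BDC:sectional}) and the surrounding discussion, so there is no discrepancy to flag.
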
To appreciate this result, let us give two simple and generic bounds on the function $t\mapsto\mm(t)$. The first one, obtained by an easy Grönwall argument, is
\begin{eqnarray*}
\mm(t) & \le & e^{-\delta t}, \qquad\textrm{where}\qquad \delta \ := \ \min_{1\le x<n}\left\{q_+(x)-q_+(x+1)+q_-(x+1)-q_-(x)\right\}.
\end{eqnarray*}
Inserting this into Corollary \ref{co:BDC} readily yields a MLSI with constant $\delta$, which is an important classical  result  \cite[Theorem 3.1]{MR2548501}. However, our time-varying estimate has the advantage of being meaningful even when $\delta=0$: for example,  (\ref{BDC:sectional}) ensures that $X$ and $Y$ must have met by the time at which $X$ hits $n$ or $Y$ hits $1$, yielding the alternative bound
\begin{eqnarray}
\mm(t) & \le & \PP_1(T_n>t)\wedge \PP_n(T_1>t),
\end{eqnarray}
where $T_z:=\min\{t\ge 0\colon X_t=z\}$ denotes the hitting time of $z$.
 As a concrete example, consider the extreme case where the jump rates are all equal to $1$: this corresponds to simple random walk on the segment, for which we classically have $\EE_1[T_n]=\Theta(n^2)$. Thus, Corollary \ref{co:BDC} shows that the entropy decay occurs on a time-scale of order $n^2$, which is actually sharp.

\subsection{Colored Exclusion Processes}
\label{sec:coloredex}
In this section, we consider a non-conservative and colored version of the popular Exclusion Process. The model is parametrized by the following ingredients:
\begin{itemize}
\item a finite set $\cS$ (the colors) equipped with a fully supported probability law $\nu$;
\item an integer $n\in\dN$ (the dimension);
\item a non-negative symmetric array $\{c(i,j)\colon {1\le i\ne j\le n}\}$ (the exchange rates);
\item a non-negative vector $\{r(i)\colon 1\le i\le n\}$ (the refresh rates).
\end{itemize}
By definition, the Colored Exclusion Process (CEP) with those parameters is the continuous-time Markov chain on $\cX=\cS^n$ whose generator acts on any function $f\colon\cX\to\dR$ as follows:
\begin{eqnarray}
\label{def:CEP}
Lf(x) & := & \sum_{1\le i<j\le n}c(i,j)\left(f(x^{i \leftrightarrow j})-f(x)\right)+\sum_{1\le i\le n}r(i)\sum_{\sigma\in\cS}\nu(\sigma)\left(f(x^{i,\sigma})-f(x)\right),
\end{eqnarray}
where $x^{i \leftrightarrow j}$ (resp. $x^{i,\sigma}$) denotes the configuration obtained from $x$ by swapping the $i-$th and $j-$th entries (resp. replacing the $i-$th entry with $\sigma$). In more concrete terms, each pair of sites $\{i,j\}$ exchange values at rate $c(i,j)$, and each site $i$ resamples its color afresh according to the law $\nu$ at rate $r(i)$. This dynamics is clearly reversible w.r.t. the product law
\begin{eqnarray*}
\pi(\dist x) & := & \prod_{i=1}^n\nu(\dist x_i).
\end{eqnarray*}
Moreover, it is irreducible as soon as the support of $r$ intersects each connected component of the graph induced by the support of $c$, which we henceforth assume. It was shown in \cite{MR4546624} that the mixing properties of the $|\cS|^n-$dimensional generator (\ref{def:CEP}) are intimately related to those of the much simpler $n\times n$ Laplace matrix 
\begin{eqnarray}
\Delta(i,j) & := & 
\left\{
\begin{array}{ll}
c(i,j) & \textrm{ if }i\ne j\\
-r(i)-\sum_{k\ne i} c(i,k) & \textrm{ if }i=j.
\end{array}
\right.
\end{eqnarray}
This symmetric matrix  describes the evolution of a random walk on $[n]$ which jumps according to the conductances $c(\cdot, \cdot)$ and is killed at the space-varying rate $r(\cdot)$. We let $t\mapsto \PP_i(T>t)$ denote the tail distribution function of the life-time of such a killed random walk, when started from site $i\in[n]$. Note that we have the spectral representation
\begin{eqnarray*}
\PP_i(T>t) & = & \sum_{k=1}^n e^{-\lambda_k t}\phi_k(i)\langle \phi_k,1\rangle,
\end{eqnarray*}
where $\langle a,b\rangle=\sum_ia(i)b(i)$ is the standard scalar product,   $\lambda_n\ge \ldots\ge \lambda_1> 0$ denote the eigenvalues of $-\Delta$, and $\phi_1,\ldots,\phi_n$ is a corresponding orthonormal basis of eigenvectors. Explicit estimates are available in many concrete examples (see \cite{MR4546624}). Our main theorem provides the following entropy contraction principle.
\begin{corollary}\label{co:CEP}For any $\mu\in\cP(\cX)$ and any  $t\ge 0$, the transition matrix $P_t:=e^{tL}$ satisfies 
\begin{eqnarray*}
H(\mu P_t\,|\,\pi) & \le & H(\mu\,|\,\pi)\max_{1\le i\le n}\PP_i(T>t).
\end{eqnarray*}
\end{corollary}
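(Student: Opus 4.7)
The plan is to apply Theorem~\ref{th:main} in its time-varying form (\ref{timevarying}) to the CEP, equipping $\cX=\cS^n$ with the Hamming distance
\[
\dist(x,y):=|\{i\in[n]:x_i\ne y_i\}|.
\]
This metric is generated, in the sense of (\ref{geodesic}), by the pairs of configurations that differ in a single coordinate; so by the Gluing Lemma it will suffice to verify Assumption~\ref{assume:sectional} and to estimate $\cc(P_t)$ on such adjacent pairs. Moreover, the dynamics is reversible with respect to the product measure $\pi$, so $P_t^\star=P_t$, and a single coupling construction will cover both tasks at once.

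The main ingredient is the \emph{basic coupling}: both chains are driven by a common set of Poisson clocks, each exchange $(i,j)$ being fired simultaneously in both copies, and each refresh at site $i$ resampling that coordinate to the same $\nu$-distributed value in both. An exchange merely permutes coordinates, hence preserves the set of disagreement sites, while a refresh can only remove a disagreement. Consequently $t\mapsto \dist(X_t,Y_t)$ is almost surely non-increasing, which handles Assumption~\ref{assume:sectional}. When $x$ and $y$ differ at exactly one coordinate $k$, the coupled chains continue to differ at at most one coordinate until that disagreement is killed, and the position of this lone disagreement performs the continuous-time walk on $[n]$ that jumps from $i$ to $j$ at rate $c(i,j)$ (whenever an exchange involving $i$ and $j$ is fired) and is killed at rate $r(i)$ (whenever $i$ is refreshed). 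Comparing rates with $\Delta$ identifies this process with the killed walk of the statement, so
\[
\EE[\dist(X_t,Y_t)] = \PP_k(T>t).
\]
The Wasserstein characterisation~(\ref{GI}) of curvature then yields $1-\cc(P_t)\le \max_{1\le k\le n}\PP_k(T>t)$, and inserting this bound into~(\ref{timevarying}) gives the claim.

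The only step requiring a little care is verifying that an exchange involving the unique disagreement site moves it rather than creating a second one: when $x,y$ disagree only at site $i$ and the exchange $(i,j)$ fires, one has $x_j=y_j$ before the swap, so afterwards coordinate $i$ is agreed upon (both chains hold the former common value $x_j$) while the disagreement has migrated to coordinate $j$. Once this bookkeeping is in place, the remainder is a direct application of Theorem~\ref{th:main}; no further analytic input is required.
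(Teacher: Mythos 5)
Your proposal is correct and matches the paper's argument essentially line for line: same Hamming metric, same synchronous (basic) coupling with joint exchanges and joint refreshes, same identification of the lone disagreement site with the $\Delta$-killed random walk, and the same invocation of Theorem~\ref{th:main} applied to $P_t$. The only cosmetic difference is that you explicitly note reversibility to collapse the two coupling tasks into one and spell out the bookkeeping for an exchange touching the disagreement site, both of which the paper leaves implicit.
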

\begin{proof}We equip $\cX$ with the \emph{Hamming distance} $\dist(x,y):=\#\{i\in[n]\colon x_i\ne y_i\}$. Note that this is \emph{not} the combinatorial distance associated with $L$, unless $r(\cdot)$ has full support. Now, fix an initial pair $(X_0,Y_0)\in\cX^2$ with $\dist(X_0,Y_0)=1$ (such pairs clearly generate the above metric) and consider the Markov chain $(X_t,Y_t)_{t\ge 0}$ on $\cX^2$ that evolves as follows:
\begin{itemize}
\item a joint exchange $(x,y)\to(x^{i\leftrightarrow j},y^{i\leftrightarrow j})$ occurs at rate $c(i,j)$ for each $1\le i<j\le n$.
\item a joint refresh $(x,y)\to(x^{i,\sigma},y^{i,\sigma})$ occurs at rate $r(i)\nu(\sigma)$ for each $(i,\sigma)\in[n]\times\cS$.
\end{itemize}
It is clear that $X$ and $Y$ are then distributed as CEPs. Moreover, the distance $(x,y)\mapsto\dist(x,y)$ is preserved or reduced under each jump, so that $\dist(X_t,Y_t)\le 1$ for all $t\ge 0$. This  establishes non-negative sectional curvature along the semi-group $(P_t)_{t\ge 0}$. In fact, the same coupling also provides an  estimate on the curvature. Specifically, the coordinate at which $X_t$ and $Y_t$ differ evolves exactly as a killed random walk with generator $\Delta$, so that
\begin{eqnarray*}
\forall t\ge 0,\qquad 1-\cc(P_t) & \le & \max_{i\in[n]}\PP_i(T> t),
\end{eqnarray*}
where $T$ is the life-time of the walk. 
Applying Theorem \ref{th:main} to $P_t$ concludes the proof.
\end{proof}
\subsection{Generalized Interchange Processes}
We now turn to a very general class of random walks on the symmetric group, which contains in particular the well studied Interchange Process. Specifically, given an integer $n\in\dN$ and a function  $c\colon 2^{[n]}\to [0,\infty)$, we consider the continuous-time  Markov chain on $\cX=\fS_n$, the symmetric group of permutations of $[n]$, whose generator acts as follows: for any function $f\colon\cX\to\dR$ and any state $x\in\cX$, 
\begin{eqnarray}
\label{def:GIP}
Lf(x) & = & \sum_{A\subseteq [n]}\frac{c(A)}{|A|!}\sum_{\sigma\in\mathfrak S_A}\left(f(x\sigma)-f(x)\right),
\end{eqnarray}
where $\fS_A$ denotes the group of permutations on $A$, and $|A|$ is the cardinality of the subset $A$. This generator is clearly reversible w.r.t the uniform law $\pi$ on $\cX$. We may think of a permutation $x\in\cX$ as assigning a unique label $x_i\in[n]$ to each site $i\in[n]$. The dynamics (\ref{def:GIP}) then simply shuffles the labels of all sites $i\in A$ uniformly at random at rate $c(A)$, for each  block $A\subseteq[n]$. 
When the rate function $A\mapsto c(A)$ is supported on blocks of size $2$,  which one interprets as weights on the edges of a graph, the process is known as the
Interchange Process. Thus, the general case is viewed as an Interchange Process on a weighted hypergraph.
We observe that the individual motion of each label is just a continuous-time random walk on $[n]$ with conductances
\begin{eqnarray}
\label{single}
\widehat{c}(i,j) & := & \sum_{A\supseteq\{i,j\}}\frac{c(A)}{|A|}.
\end{eqnarray} 
Relating the mixing properties of the high-dimensional process generated by (\ref{def:GIP}) to those of its one-dimensional marginals (\ref{single}) is a natural and important problem, which has been the subject of active research  \cite{MR2629990,dieker2010interlacings,oliveira2013mixing,
 MR3413687,MR4164852,MR4254474,bristiel2022entropy,alon2023aldous}. The first author conjectured that, for any choice of weights $c$, the spectral gap of the process generated by \eqref{def:GIP} coincides with the spectral gap of the single particle process with  rates \eqref{single},  see \cite[Conjecture 1.7]{bristiel2022entropy}. This represents the hypergraph generalization of the renowned Aldous conjecture, affirming the aforementioned equivalence for the Interchange Process. While Aldous conjecture was proved in \cite{MR2629990}, the general case has been verified only for certain classes of hypergraph weights, see \cite{bristiel2022entropy,alon2023aldous}. 
  We note that for a given set of edge conductances $\widehat c$, there may exist multiple choices of hypergraph weights $c$ that satisfy \eqref{single}. The conjecture thus asserts that all such choices yield the same spectral gap. It is noteworthy that our estimate below, which is a simple application of our main result,  provides control over the decay of relative entropy, independent of the specific choice of $c$ compatible with \eqref{single}, offering additional support for the conjecture's validity.
%
\begin{corollary}\label{corshuffle}
For any   $\mu\in\cP(\cX)$ and   $t\ge 0$, the matrix $P_t:=e^{tL}$ generated by (\ref{def:GIP}) satisfies
\begin{eqnarray}
\label{GIP:decays}
H(\mu P_t\,|\,\pi) & \le & H(\mu\,|\,\pi)\max_{1\le i,j\le n}\PP_{i,j}\left(T>t\right),
\end{eqnarray}
where $T:=\min\{t\ge 0\colon I_t=J_t\}$  denotes the meeting time of two independent random walks on $[n]$ with conductances (\ref{single}) starting from $I_0=i$ and $J_0=j$, respectively. 
\end{corollary}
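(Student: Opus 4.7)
The plan is to apply Theorem~\ref{th:main} to the transition matrix $P_t$ for each fixed $t\ge 0$, equipping $\fS_n$ with the \emph{Cayley distance} $\dist(x,y) :=$ minimum number of transpositions in a factorization of $yx^{-1}$. This metric is generated in the sense of (\ref{geodesic}) by the pairs at Cayley distance one (i.e., $y=x\tau$ for a transposition $\tau$), and $P_t$ is reversible, so the task reduces to constructing, for each generating pair, a coupling of $P_t(x,\cdot)$ and $P_t(y,\cdot)$ that is both almost-surely non-expanding (so Assumption~\ref{assume:sectional} holds) and contracts in expectation by $\max_{i,j}\PP_{i,j}(T>t)$.

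Fix $y=x\tau_{ij}$ and drive the joint dynamics $(X_s,Y_s)$ by a single Poisson clock of rate $c(A)$ per block $A$. At each arrival, draw a uniform $\sigma\in\fS_A$ and, writing $(I,J)$ for the current Hamming discrepancy positions, apply $\sigma$ to $X$ together with the same $\sigma$ to $Y$ if $A\not\supseteq\{I,J\}$, or $\tau_{IJ}\sigma$ to $Y$ if $A\supseteq\{I,J\}$; the latter is a valid marginal since $\tau_{IJ}\in\fS_A$ and it forces $X'=Y'$. A direct computation shows that $\dist(X_s,Y_s)\in\{0,1\}$ at all times (so Assumption~\ref{assume:sectional} is fulfilled) and that the Cayley distance drops from one to zero precisely at the \emph{annihilation time} $S:=\inf\{s\ge 0:\text{some }A\supseteq\{I_s,J_s\}\text{ fires}\}$; in particular, $\EE\dist(X_t,Y_t)=\PP(S>t)$.

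The crux is then to show $\PP(S>t)\le \PP_{i,j}(T>t)$, where $T$ is the meeting time of two genuinely independent walks with conductances $\widehat c$ starting at $(i,j)$. Two key pointwise inequalities should yield this: the annihilation rate $\sum_{A\supseteq\{I,J\}}c(A)$ of the coupled process at state $(I,J)$ dominates the independent walks' meeting rate $2\widehat c(I,J) = 2\sum_{A\supseteq\{I,J\}}c(A)/|A|$ (since $|A|\ge 2$), and each coupled discrepancy's motion rate to any given position is dominated by that of the corresponding independent walk, the ``missing mass'' feeding into the extra absorption. Both effects favour earlier absorption of the coupled process, and the resulting stochastic dominance $S\le_{\mathrm{st}}T$ should be derivable from a Duhamel comparison of the two semigroups, writing $g_i-g_c = \int_0^t e^{(t-s)L_i}(L_i-L_c)g_c\,ds$ and verifying $(L_i-L_c)g_c\ge 0$ from the specific block structure of the rates.

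Putting this together gives $\cc(P_t)\ge 1-\max_{i,j}\PP_{i,j}(T>t)$, and Theorem~\ref{th:main} applied to $P_t$ delivers (\ref{GIP:decays}). The main technical obstacle is the stochastic-dominance step above: a naive pathwise coupling is insufficient, because at a both-contained block event the coupled process annihilates deterministically while two independent walkers sent to uniform positions in $A$ collide only with probability $1/|A|$, so the comparison cannot be established trajectorially and must instead be handled at the level of distributions or generators.
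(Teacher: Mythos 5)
Your overall plan matches the paper's: same transposition (Cayley) metric generated by single transpositions, same block--clock coupling with forced coalescence when a firing block $A$ contains both discrepancy positions, same reduction $\EE[\dist(X_t,Y_t)]=\PP(S>t)$, and the final step of plugging the curvature bound into Theorem~\ref{th:main} applied to $P_t$. So structurally you are doing what the paper does.

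You are also right to be suspicious of the stochastic domination $S\le_{\mathrm{st}}T$: the paper dismisses it as ``easy to see'' without giving a coupling, and indeed a trajectorial coupling with two genuinely independent walkers is awkward (though your probability-$1/|A|$ heuristic mixes up what block events mean for independent walkers, which have their own clocks). However, your proposed fix --- ``verify $(L_i-L_c)g_c\ge 0$ from the specific block structure of the rates'' --- does not close the gap as stated, because the positivity is \emph{not} a consequence of the rates alone. A direct computation gives, writing $h$ for a function vanishing on the diagonal,
\begin{equation*}
\bigl((L_i-L_c)h\bigr)(i,j)\;=\;\sum_{A\supseteq\{i,j\}}\frac{c(A)}{|A|}\sum_{k\in A\setminus\{i,j\}}\bigl[h(k,j)+h(i,k)-h(i,j)\bigr],
\end{equation*}
and the bracketed quantity has no definite sign for a generic $h$. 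What makes the Duhamel argument work is that the coupled-process survival probability $g_c(s,\cdot,\cdot)$ satisfies the \emph{triangle inequality} $g_c(s,i,j)\le g_c(s,i,k)+g_c(s,k,j)$, which you do not establish. That inequality itself requires its own coupling: run a \emph{three}-token merging process driven by the same block clocks (tokens inside a firing block all merge to a single uniform position in $A$); then the pairwise coalescence times obey $S_{ij}\le\max(S_{ik},S_{kj})$, hence $\{S_{ij}>t\}\subseteq\{S_{ik}>t\}\cup\{S_{kj}>t\}$, and a union bound gives the triangle inequality. With that ingredient the Duhamel comparison $g_i-g_c=\int_0^t e^{(t-s)L_i}(L_i-L_c)g_c\,ds\ge 0$ goes through and completes the proof, but as written your proposal stops short of the one nontrivial lemma, and in fact points to the wrong source of positivity.
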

\begin{proof}
We equip $\cX$ with the \emph{transposition distance} $\dist(x,y)$, which is the minimal number of swaps that need to be performed in order to turn $x$ into $y$. This metric is generated, in the sense of (\ref{geodesic}), by those pairs of states that differ in exactly two coordinates. Starting from such a pair $(X_0,Y_0)$, we can construct a coupling $(X_t,Y_t)_{t\ge 0}$ as follows: we equip each block $A\subseteq[n]$ with an independent Poisson clock of rate $c(A)$ and, whenever the clock rings, we simply replace the current state $(x,y)$ with $(x\sigma,y\sigma)$, where $\sigma$ is a uniformly chosen element of $\fS_A$. Such a transformation clearly preserves the distance, and this already establishes non-negative sectional curvature along the semi-group. Now, let us modify our coupling as follows: whenever the clock of a block $A$ rings, if the current states $x$ and $y$ happen to agree outside $A$, we ensure coalescence by replacing $(x,y)$ with $(x\sigma,x\sigma)$ instead of $(x\sigma,y\sigma)$. It is easy to see that the time at which this occurs is stochastically dominated by the meeting time $T$ of two independent random walks with conductances (\ref{single}) starting at the two sites where $X_0$ and $Y_0$ differ. This yields the curvature estimate 
\begin{eqnarray*}
1-\cc(P_t) & \le & \max_{1\le i,j\le n}\PP_{i,j}(T>t).
\end{eqnarray*}
Applying Theorem \ref{th:main} to $P_t$ concludes the proof. 
\end{proof}
To give a concrete example, consider the  unit rate Interchange Process on a segment of length $n$, which corresponds to the rate function
\begin{eqnarray*}
c(A) & := & \left\{
\begin{array}{ll}
2 & \textrm{if }A=\{i,i+1\}\textrm{ for some }i\in[n-1]\\
0 & \textrm{else}.
\end{array}
\right.
\end{eqnarray*}
Then the one-dimensional dynamics (\ref{single}) is that of the simple random walk on the $n-$segment, for which it is classical that the worst-case meeting time  $T$ is of order $n^2$. 
Thus, (\ref{GIP:decays}) shows that the  entropy decay occurs on a time-scale of order $n^2$, which is actually sharp. It is also instructive to take a look at the Interchange Process on the complete graph, a.k.a.\ Random Transpositions, namely
\begin{eqnarray*}
c(A) & := & \left\{
\begin{array}{ll}
\frac{4}{n(n-1)} & \textrm{if }A=\{i,j\}\textrm{ for some }1\le i<j\le n\\
0 & \textrm{else}.
\end{array}
\right.
\end{eqnarray*}
In this  case, coalescence clearly occurs at the constant rate $\kappa=4/n(n-1)$, thereby providing the bound   (\ref{MLSI}) for the MLSI. The resulting mixing-time bound is rather poor, being off by a factor $n$ with respect to the known $O(n\log n)$ behavior \cite{diaconis1981generating}, but it can be considerably enhanced by using our time-varying estimate  (\ref{timevarying}). Indeed,  it was shown in \cite{MR3936154} that the curvature $t\mapsto \cc(P_t)$ undergoes a remarkable transition from $o(1)$ to $\Theta(1)$ as $t$ passes the critical value $n/2$, thereby providing an excellent illustration of the  ``\emph{curvature improves at large scales}'' principle.  In fact, it was shown in  \cite{MR3936154} that when $t/n \to\infty$ as $n\to\infty$, 
\begin{eqnarray*}
1-\cc(P_{t})& \le & e^{-\frac{(2-o(1))t}n}.
\end{eqnarray*}
Combining this with our main result, we deduce that the worst-case relative entropy to equilibrium $\sup_{\mu\in\cP(\cX)} H(\mu P_t\,|\,\pi)$ is $o(1)$ already by time $t=(n\log n)/(2-o(1))$. This estimate is sharp, and establishes  cutoff in relative entropy. Moreover, it provides an improvement over the best known bounds \cite{gao2003exponential,goel2004modified} which predict $e^{-t/(n-1)}$ for the relative entropy decay in this model. More generally, the same argument applies to the random walk generated by $k-$cycles for any $k=o(n)$ and allows us to conclude that the total-variation cutoff established in \cite{MR3936154} also occurs in relative entropy, which seems to be new. 

Another remark concerning the general result in Corollary \ref{corshuffle}
is that the exact same bound applies, for any fixed $k\in[n]$,  to the case where we have $k$ indistinguishable particles undergoing the same dynamics, that is a hypergraph version of the exclusion process with $k$ particles. This is obtained from \eqref{GIP:decays} by a simple projection argument, by declaring black all particles labeled $1,\dots,k$ and white all particles labeled $k+1,\dots,n$, and by keeping track only of the particle colors. The resulting bounds may be used to investigate mixing time relations in the spirit of \cite{oliveira2013mixing,connor2019mixing}.   
The same of course applies as well to the case where one has more than two colors, providing a conservative  version of the model with sources that we discussed in Section \ref{sec:coloredex}.

Finally, we observe that an estimate as in Corollary \ref{corshuffle}
 can be obtained for a slightly different model, where $k$ labeled walkers undergo synchronous updates along the hyperedges of a weighted hypergraph with rates given by the weight function $c$, but are otherwise independent. More precisely, one starts with the $k$ particles in arbitrary locations (with no constraints on their overlap), and the dynamics proceeds by synchronous updates  with rate $c(A)$ of all particles sitting at the vertices of the hyperedge $A\subset [n]$. The result of one update at $A$ is that all particles involved are independently reshuffled along the vertices in $A$. 
Thus the stationary measure is uniform over $[n]^k$.  This model was introduced in \cite{bristiel2022entropy}, where it was shown that the entropy decay of the system is controlled by the entropy decay of a single particle. The class of models includes in particular the so-called binomial splitting process studied in \cite{quattropani2023mixing}. An application of our main result here produces the exact same bound as in Corollary \ref{corshuffle}, for any fixed $k\in\dN$.
\subsection{Glauber Dynamics}

The celebrated \emph{Markov chain Monte Carlo revolution} in computational statistics is fundamentally based on the simple but far-reaching idea -- attributed to Metropolis \cite{doi:10.1063/1.1699114} and Hastings \cite{MR3363437} -- that approximate samples from a target probability distribution $\pi$ can be efficiently produced by running an appropriate Markov chain that admits $\pi$ as its equilibrium law; see the survey paper by P. Diaconis \cite{MR2476411} and the references therein. Among the various particular implementations that have been proposed, one of the most popular is probably \emph{Gibbs sampling}, also known as \emph{Glauber dynamics}. Sticking to our discrete setting for simplicity, let us assume that out target probability measure $\pi$ lives on  $\cS^n$, where $\cS$ is a fixed finite set. As before, we let $x^{i,\sigma}:=(x_1,\ldots,x_{i-1},\sigma,x_{i+1},\ldots,x_n)$ denote the vector obtained from $x$ by changing the $i-$th coordinate to $\sigma\in\cS$. We write $\cX:=\{x\in\cS^n\colon\pi(x)>0\}$ for the support of $\pi$ and, for each $x\in\cX$ and each $i\in[n]$, we let $\pi_i(\cdot|x)$ denote the conditional law of the $i-$th coordinate, given that the remaining coordinates agree with $x$: 
\begin{eqnarray*}
\pi_i(\sigma|x) & := & \frac{\pi(x^{i,\sigma})}{\sum_{\sigma'\in\cS}\pi(x^{i,\sigma'})}.
\end{eqnarray*}
The Glauber dynamics for $\pi$ is the Markov chain with state space $\cX$ and transition matrix
\begin{eqnarray}
\label{def:Glauber}
P(x,y) & := & \frac{1}{n}\sum_{i\in[n]}\sum_{\sigma\in\cS}\pi_i(\sigma|x){\bf 1}_{(y=x^{i,\sigma})}.
\end{eqnarray} In words, a transition consists in selecting one of the $n$ coordinates uniformly at random and resampling its content according to the measure $\pi$, conditioned on the current values of all other coordinates. This dynamics is  clearly reversible with respect to $\pi$, and irreducible as soon as the support of $\pi$ is connected under single-coordinate changes, which we henceforth assume.  In the idealized case where $\pi$ is a product measure, the entropy contraction (\ref{EI}) trivially holds with the optimal constant $\kappa=1/n$. In light of this, it is natural to hope for a similar behavior when the target distribution $\pi$ has \emph{weak dependencies}. The following general result formalizes this intuition.
\begin{corollary}\label{co:Glauber}Suppose that $\pi$ satisfies the following weak dependency condition:
\begin{eqnarray}
\label{assume:weakdep}
\pi_i(y_i|x)  & \ge & \sum_{j\ne i}\sum_{\sigma\ne x_j}\left(\pi_j(\sigma|y)-\pi_j(\sigma|x)\right)_+,
\end{eqnarray}
for all $i\in[n]$ and all $x,y\in\cX$ that differ exactly at the $i-$th coordinate. Then the matrix $(\ref{def:Glauber})$ has non-negative sectional curvature and satisfies the entropic contraction (\ref{EI}) with constant
\begin{eqnarray}
\label{def:kappa}
\kappa & := & \frac{1}{n}\min_{i,x,y}\left\{1-\sum_{j\ne i}\sum_{\sigma\ne x_j}\left|\pi_j(\sigma|y)-\pi_j(\sigma|x)\right|\right\} \ \ge 0,
\end{eqnarray}
where the minimum ranges over all $i\in[n]$ and all $x,y\in\cX$ that differ exactly at $i$.
\end{corollary}
\begin{proof}We equip $\cX$ with the combinatorial distance $\dist(x,y):=\min\{k\in\dN\colon P^k(x,y)>0\}$, which is generated by those pairs $(x,y)\in\cX^2$ that differ at a single coordinate $i\in[n]$. Now fix such a pair, and consider the coupling $(X,Y)$ of $P(x,\cdot)$ and $P(y,\cdot)$ defined as follows:
\begin{eqnarray*}
\left(X,Y\right) & := & \left\{
\begin{array}{ll}
(x,x) & \textrm{w.p. } {\frac{1}{n}\left(\pi_i(x_i|y)-\sum_{j\ne i}\sum_{\sigma\ne x_j}\left(\pi_j(\sigma|x)-\pi_j(\sigma|y)\right)_+\right)};\\
(y,y) & \textrm{w.p. } {\frac{1}{n}\left(\pi_i(y_i|x)-\sum_{j\ne i}\sum_{\sigma\ne x_j}\left(\pi_j(\sigma|y)-\pi_j(\sigma|x)\right)_+\right)};\\
(x^{i,\sigma},x^{i,\sigma}) & \textrm{w.p. } {\frac{1}{n}\pi_i(\sigma|x)}, \textrm{ for }\sigma\notin\{x_i,y_i\};\\
(x^{j,\sigma},y^{j,\sigma}) & \textrm{w.p. } {\frac{1}{n}\left(\pi_j(\sigma|x)\wedge \pi_j(\sigma|y)\right)}, \textrm{ for }j\ne i\textrm{ and }\sigma\ne x_j;\\
(x^{j,\sigma},x) & \textrm{w.p. } {\frac{1}{n}\left(\pi_j(\sigma|x)-\pi_j(\sigma|y)\right)_+}, \textrm{ for  }j\ne i\textrm{ and }\sigma\ne x_j;\\
(y,y^{j,\sigma}) & \textrm{w.p. } {\frac{1}{n}\left(\pi_j(\sigma|y)-\pi_j(\sigma|x)\right)_+}, \textrm{ for  }j\ne i\textrm{ and }\sigma\ne x_j;\\
(x,y) & \textrm{with the remaining probability}.
\end{array}
\right.
\end{eqnarray*}
Note that the first two probabilities are non-negative thanks to (\ref{assume:weakdep}). It is immediate to check that $X$ has law $P(x,\cdot)$ and that $Y$ has law $P(y,\cdot)$. Moreover, we have $\dist(X,Y)=0$ in the first three cases, and $\dist(X,Y)=1$ in the remaining cases. This shows that $P$ has non-negative sectional curvature. Finally, adding up the probabilities of the three first cases gives exactly the constant $\kappa$ appearing at (\ref{def:kappa}), and hence
\begin{eqnarray*}
\kappa(P) & \ge & 1-\EE[\dist(X,Y)] \ = \ \kappa.
\end{eqnarray*}
Applying our main result concludes the proof.
\end{proof}
Corollary \ref{co:Glauber} contains many special cases of interest, obtained by specializing the target measure $\pi$ to various popular spin systems such as the Ising Model, the Potts Model, or the Hard-Core Model (see the lecture notes \cite{MR1746301} for an introduction to those models). 
In all those examples and many others, our weak dependency assumption (\ref{assume:weakdep}) holds as soon as the temperature   is above an explicit threshold, and Corollary \ref{co:Glauber} guarantees entropy contraction with a constant of the right order of magnitude $\kappa=\Theta(1/n)$. Results of this form have a long history, and have been obtained using a variety of sophisticated methods  \cite{Zeg,Lu-Yau,MR1746301,Cesi,MR3434252,MR4015662,MR4415182,conforti2022probabilistic,BB19,chen2022localization}. Rather than delving into the specificity of each model and trying to optimize the associated constants, let us state one simple general consequence of  Corollary \ref{co:Glauber}  that applies to all high-temperature spin systems with pairwise interactions. 
While the result stated in Corollary \ref{cor:hightemp} below does not necessarily improve over existing bounds, it offers a comparable estimate with a simple and entirely different approach, thus providing an instructive application of our main result Theorem \ref{th:main}.
Specifically, consider a target measure $\pi$ of the form
\begin{eqnarray}
\label{def:spin}
\pi(x) & := & \frac{1}{C}\exp\left\{\sum_{1\le i< j\le n}\psi_{ij}(x_i,x_j)\right\},
\end{eqnarray}
where each $\psi_{ij}\colon\cS^2\to\dR$ is an arbitrary interaction function, and $C$ a normalizing constant. For convenience, set $\psi_{ij}(\sigma,\tau):=\psi_{ji}(\tau,\sigma)$ for $i>j$, and $\psi_{ii}(\tau,\sigma):=0$ for all $i$ and $(\tau,\sigma)\in\cS^2$. The influence of $i$ on $j$ is naturally measured by the quantity
\begin{eqnarray}
J_{ij} & := & \frac{1}{2}\max_{(\sigma,\sigma',\tau)\in\cS^2}|\psi_{ij}(\sigma,\tau)-\psi_{ij}(\sigma',\tau)|.
\end{eqnarray}
We can then define the maximal influence in our system as follows:
\begin{eqnarray}
\|J\| & := & (|\cS|-1)\max_{1\le i\le n}\left\{\sum_{j=1}^nJ_{ij}\right\}\,.
\end{eqnarray}
\begin{corollary}\label{cor:hightemp}
There is a universal constant $\varepsilon\in(0,1)$ ($\varepsilon=1/3$ works)  such that whenever 
$
\|J\|  \le \varepsilon,
$
the Glauber dynamics for (\ref{def:spin}) exhibits entropic contraction with constant 
\begin{eqnarray*}
\kappa & := & \frac{1-\|J\|}{n}\,.
\end{eqnarray*}
\end{corollary}
\begin{proof}
It follows from the definition that for any $x\in\cX$, $j\in[n]$, and $\sigma\in\cS$,
\begin{eqnarray}
\label{conditional}
\pi_j(\sigma|x) & = & \frac{1}{1+\sum_{\sigma'\ne\sigma}e^{\sum_{k}\psi_{jk}(\sigma',x_k)-\psi_{jk}(\sigma,x_k)}}\,.
\end{eqnarray}
Now, an elementary differentiation shows that for any $d\in\dN$ and any coefficients $a_1,\ldots,a_d\ge 0$, the function $F\colon\dR^d\to[0,1]$ defined by $F(u)=\frac{1}{1+a_1e^{u_1}+\cdots+a_de^{u_d}}$ satisfies $|F(u)-F(v)|\le \frac{1}{4}\|u-v\|_\infty$.
Consequently, if $x,y\in\cX$ differ exactly at the $i-$th coordinate, we obtain 
\begin{eqnarray*}
\left|\pi_j(\sigma|x)-\pi_j(\sigma|y)\right| & \le & \frac{1}{4}\max_{\sigma'\ne\sigma}\left\{\psi_{ij}(x_i,\sigma')-\psi_{ij}(y_i,\sigma')+\psi_{ij}(y_i,\sigma)-\psi_{ij}(x_i,\sigma)\right\} \
 \le \ J_{ij}\,.
\end{eqnarray*}
Thus, the right-hand side of (\ref{assume:weakdep}) is at most $\|J\|$. On the other hand, in view of the expression (\ref{conditional}), the left-hand side is at least $\left(1+qe^{\frac{2\|J\|}{q}}\right)^{-1}$, where $q=|\cS|-1$. It follows that (\ref{assume:weakdep}) is satisfied as soon as $\|J\|\le \varepsilon_q$, where $\varepsilon_q\in(0,1)$ is the unique solution to the equation
\begin{eqnarray*}
{\varepsilon_q} & = & \left(1+e^{\frac{2\varepsilon_q}{q}}\right)^{-1}.
\end{eqnarray*}
Note that $\varepsilon_q$ increases with $q$, so that $\varepsilon_q\ge \varepsilon_1\approx 0.337$. Finally, the bound $\left|\pi_j(\sigma|x)-\pi_j(\sigma|y)\right|\le J_{ij}$ shows that the constant $\kappa$ in Corollary \ref{co:Glauber} is at least $\frac{1-\|J\|}{n}$, and the proof is complete.
\end{proof}
\subsection{Zero-Range Processes}
\label{sec:ZRP}
Introduced by Spitzer \cite{Spitzer}, the \emph{Zero-Range Process} (\textsc{ZRP}) is a  generic interacting particle system  in which individual jumps occur at a rate that only depends on the current number of particles present at the source. The model is parameterized by the following ingredients:
\begin{itemize}
\item two  integers $m,n\ge 1$ representing the number of particles and sites, respectively;
\item an irreducible stochastic matrix $G=(G_{ij})_{1\le i,j\le n}$  specifying the geometry;
\item a function $r_i\colon\{1,2,\ldots\}\to(0,\infty)$ encoding the kinetics at each site $i\in [n]$.
\end{itemize}
The \textsc{ZRP} with these parameters is a continuous-time Markov chain with state space 
\begin{eqnarray}
\label{def:ZRPX}
\cX & := & \left\{(x_1,\ldots,x_n)\in\dZ_+^n\colon \sum_{i=1}^n x_i=m\right\},
\end{eqnarray}
and generator $L$ acting as follows: for any $f\colon\cX\to\dR$ and any $x=(x_1,\ldots,x_n)\in\cX$, 
\begin{eqnarray}
\label{def:ZRPL}
(L f)(x) & := & \sum_{1\le i,j\le n} r_i(x_i)G_{ij}\left(f(x+\delta_j-\delta_i)-f(x)\right),
\end{eqnarray}
where $(\delta_1,\ldots,\delta_n)$ denotes the canonical $n-$dimensional basis, and with the convention that $r_i(0)=0$ for all $i\in [n]$ (no jumps from empty sites). In words, a site $i$ with $k$ particles expels a particle at rate $r_i(k)$, and the latter goes to site $j$ with probability $G_{ij}$. It is immediate to check that the generator $L$ is irreducible, with invariant measure
\begin{eqnarray}
\pi(x) & := & \frac{1}{C}\prod_{i=1}^n\frac{\nu_i^{x_i}}{r_i(1)r_i(2)\cdots r_i(x_i)},
\end{eqnarray}
where $\nu=\nu_G$ denotes the unique invariant law of $G$ and where $C$ is a normalizing constant. Let us point out that $L$ is \emph{not} reversible, unless $G$ is. More precisely, the adjoint $L^\star$  is obtained from $L$  by  replacing the matrix $G$ with its adjoint $G^\star$ in the formula (\ref{def:ZRPL}). We henceforth make the standard assumption that the rate functions are monotone:
\begin{eqnarray}
\label{assume:monotone}
\forall i\in[n],\qquad \forall k\in[m],\qquad r_i(k) & \ge & r_i(k-1).
\end{eqnarray}
Also, we  equip the state space $\cX$ with (half) the $L^1$ distance $\dist(x,y) := \frac 12\sum_{i=1}^n|x_i-y_i|$.  Notice, again, that this is \emph{not}  the combinatorial distance induced by $L$,  unless $G$ has full support. Nevertheless, this choice is the `right' one in view of the following  result. 
\begin{lemma}[Attractiveness]\label{lm:ZRP}Under assumption (\ref{assume:monotone}), both the transition matrix $P_t:=e^{tL}$ and its adjoint $P_t^\star:=e^{tL^\star}$  have non-negative sectional curvature at any time $t\ge 0$.  
\end{lemma}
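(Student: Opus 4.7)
The plan is to produce, for each pair $(x,y)$ with $\dist(x,y)=1$, a continuous-time coupling of the $L$-process started at $x$ with the $L$-process started at $y$ that stays at distance at most $1$ forever, and to do the same with $L$ replaced by $L^\star$. Since the metric $\dist(x,y)=\frac{1}{2}\sum_i|x_i-y_i|$ is generated, in the sense of (\ref{geodesic}), by pairs of the form $y=x-\delta_a+\delta_b$ with $a\ne b$ and $x_a\ge 1$, this will, via the Gluing Lemma and Assumption~\ref{assume:sectional}, give non-negative sectional curvature of both $P_t$ and $P_t^\star$ (note that Assumption~\ref{assume:sectional} for $P_t$ is a statement about couplings of $P_t^\star$ and vice versa, so both couplings are genuinely needed).

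For the generator $L$, I would use the \emph{basic coupling}: at each site $k$, a common Poisson clock of rate $\min(r_k(X_{k,t}),r_k(Y_{k,t}))$ triggers a joint expulsion from $k$ in both chains, sending a particle to $\ell$ with probability $G_{k\ell}$; the leftover rate $|r_k(X_{k,t})-r_k(Y_{k,t})|$ drives a solo jump in whichever chain currently has the larger local pile. Starting at $(X_0,Y_0)=(x,y)$ with $y=x-\delta_a+\delta_b$, the monotonicity (\ref{assume:monotone}) confines the solo rates to the two sites $a$ and $b$. A short case check then covers every possible move at time $0$: a coupled jump from any site preserves $X_t-Y_t=\delta_a-\delta_b$; a solo jump of $X_t$ from $a$ to some $\ell$ turns this difference into $\delta_\ell-\delta_b$, of $\dist$-length at most $1$ and vanishing when $\ell=b$; a solo jump of $Y_t$ from $b$ to some $\ell$ turns it into $\delta_a-\delta_\ell$, again of $\dist$-length at most $1$ and vanishing when $\ell=a$. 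Since the distance changes only at jump times and every jump keeps it at most $1$, we get $\dist(X_t,Y_t)\le 1$ almost surely for all $t\ge 0$.

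For the adjoint $L^\star$, a short computation from the product form of $\pi$ shows that $L^\star$ is once again a zero-range generator of the form (\ref{def:ZRPL}), with the same rate functions $r_i$ but with $G$ replaced by its $L^2(\nu)$-adjoint $G^\star(i,j):=\nu_jG_{ji}/\nu_i$. Because $G^\star$ is still an irreducible stochastic matrix, and because the basic coupling only relied on $G$ being stochastic, rerunning the same construction with $G^\star$ in place of $G$ yields the second required coupling. Together, these two couplings give non-negative sectional curvature of both $P_t$ and $P_t^\star$.

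The main technical care lies in the edge cases $x_a=1$ (so $y_a=0$) and $x_b=0$, in which the coupled rate at the corresponding site collapses to zero and the full local rate is pushed into the solo channel. The convention $r_i(0)=0$ combined with the monotonicity (\ref{assume:monotone}) keeps the solo rates $r_a(x_a)-r_a(x_a-1)$ and $r_b(x_b+1)-r_b(x_b)$ nonnegative, so the coupling remains well defined; nonetheless, writing out these boundary cases without sign errors is the one place where the otherwise straightforward argument calls for genuine attention.
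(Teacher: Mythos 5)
Your proof is correct and is, at bottom, the same coupling as the paper's, written in a different idiom. The paper tracks a background ZRP $Z$ with $m-1$ particles plus two tagged ``interface'' particles $I_t,J_t$ whose jump rates are the increments $r_u(Z_u+1)-r_u(Z_u)$, and sets $X_t = Z_t+\delta_{I_t}$, $Y_t=Z_t+\delta_{J_t}$; you run the basic coupling of the two full configurations. These produce literally the same joint process: the $\min$-rate joint jumps in your coupling are precisely the $Z$-transitions in the paper's, and the leftover solo jumps at the two discrepancy sites are precisely the $I$- and $J$-moves, with monotonicity pinning the larger rate to the larger pile. Both establish $\dist(X_t,Y_t)\le 1$ along a geodesic pair and then invoke the Gluing Lemma. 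Two stylistic remarks in your favor: you correctly and explicitly note that Assumption~\ref{assume:sectional} for $P_t$ is a requirement on couplings of $P_t^\star$ (and vice versa), a point the paper elides (it says ``Thus, $P_t$ has non-negative sectional curvature'' after coupling $P_t$, when strictly this yields the claim for $P_t^\star$ first; of course the conclusion is unaffected since both are proved). You also handle the boundary cases $x_a=1$, $x_b=0$ with the convention $r_i(0)=0$, which is the one place that genuinely needs checking and which the paper's ``$Z$ plus tagged particle'' phrasing handles automatically.
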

\begin{proof} Fix $z=(z_1,\ldots,z_n)\in\dZ_+^n$ with $\sum_{i=1}^nz_i=m-1$, and  $i\in[n]$. Now, let $Z=(Z_t)_{t\ge 0}$ be a ZRP with $m-1$ particles starting from $Z_0=z$ and, conditionally on $Z$, let $I=(I_t)_{t\ge 0}$ be a time-inhomogeneous random walk on $[n]$ starting from $I_0=i$ and jumping from any site $u$ to any site $v$ at the time-varying rate $\left[r_u(Z_t+1)-r_u(Z_t)\right]\times G_{uv}$. Then, the formula
\begin{eqnarray}
\label{def:X}
X_t & := & Z_t+\delta_{I_t},
\end{eqnarray}
clearly defines a ZRP $X=(X_t)_{t\ge 0}$ with $m$ particles starting from $X_0=z+\delta_i$. Given another site $j\in[n]$, we can of course enrich the above construction by adding a second walk $J=(J_t)_{t\ge 0}$ whose conditional evolution given $Z$ is dictated by the same time-varying rates $\left[r_u(Z_t+1)-r_u(Z_t)\right]\times G_{uv}$, but which now starts from $J_0=j$.  The formula
\begin{eqnarray}
\label{def:Y}
Y_t & := & Z_t+\delta_{J_t},
\end{eqnarray}
 defines a new ZRP $Y=(Y_t)_{t\ge 0}$ with $m$ particles, now starting from $Y_0=z+\delta_j$. From (\ref{def:X}-\ref{def:Y}), it is clear that the pair $(X,Y)$ resulting from this construction satisfies
\begin{eqnarray}
\label{ZRP:coupling}
\dist\left(X_t,Y_t\right) & = & \left\{ \begin{array}{ll}
1 & \textrm{if }I_t\ne J_t;\\
0 & \textrm{if }I_t= J_t.
\end{array}
\right.
\end{eqnarray}
Setting $x:=z+\delta_i$ and $y:=z+\delta_j$, we have thus constructed a coupling $(X_t,Y_t)$ of $P_t(x,\cdot)$ and $P_t(y,\cdot)$ such that $\dist(X_t,Y_t)\le\dist(x,y)$. To conclude, observe that any pair $(x,y)\in\cX^2$ with $\dist(x,y)=1$ can be written as $(x,y)=(z+\delta_i,z+\delta_j)$ for some $i\ne j\in[n]$ and some $z\in\dZ_+^n$ with $\sum_{i=1}^nz_i=m-1$. Moreover, our metric $\dist$ is generated by the set of such pairs, in the sense of (\ref{geodesic}). Thus, $P_t$ has non-negative sectional curvature, and  replacing $G$ with $G^\star$ yields the same conclusion for $P_t^\star$.\end{proof}
The existence of a  monotone coupling  between ZRPs with different numbers of particles  is of course a well known consequence of the rate monotonicity (\ref{assume:monotone}), but its interpretation in terms of sectional curvature seems to be new. The very same coupling actually also provides an estimate on the curvature of the ZRP, which is exactly what we need in order to apply Theorem \ref{th:main}. More precisely, the property (\ref{ZRP:coupling}) guarantees that for all $t\ge 0$, 
\begin{eqnarray}
\label{ZRP:curvature}
1-\cc(P_t) & \le & \max_{z,i,j}\PP_{z,i,j}(T>t),
\end{eqnarray}
where the maximum ranges over all possible choices for the initial triple $(z,i,j)$  appearing in the above proof, and where $T:=\inf\left\{t\ge 0\colon I_t=J_t\right\}$ denotes the coalescence time of our two time-inhomogeneous random walks $I$ and $J$. Note that we have specified the conditional distributions of $I$ and $J$ given $Z$, but not the way in which those two conditional distributions were actually coupled: the formula (\ref{ZRP:curvature}) is valid for any such coupling. To appreciate its strength, let us consider the important \emph{mean-field} case where the matrix $G$ has rank one:
\begin{eqnarray}
\label{def:MFZRP}
\forall i,j\in[n],\qquad G_{ij} & = & \nu_j.
\end{eqnarray} 
\begin{corollary}\label{co:ZRP}The mean-field  ZRP process satisfies the MLSI (\ref{MLSI}) with  $\kappa=\delta$, where 
\begin{eqnarray*}
\label{def:delta}
\delta & := & \min_{i\in[n],k\in[m]}\{r_{i}(k+1)-r_i(k)\}.
\end{eqnarray*}
\end{corollary}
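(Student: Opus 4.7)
The plan is to apply Theorem \ref{th:main} to the semigroup $P_t$, exploiting that Lemma \ref{lm:ZRP} already grants non-negative sectional curvature and that the curvature bound (\ref{ZRP:curvature}) reduces the task to controlling the meeting time $T$ of the auxiliary walks $I$ and $J$ appearing in the proof of that lemma. Under the mean-field assumption (\ref{def:MFZRP}), the rate at which either walk jumps from a site $u$ to a site $v$ factorizes as $\rho_u(t)\,\nu_v$, where $\rho_u(t):=r_u(Z_t+1)-r_u(Z_t)$ depends on $(u,t)$ through the environment but the destination law $\nu$ is independent of $u$; moreover, by the definition of $\delta$ and the monotonicity (\ref{assume:monotone}), one has $\rho_u(t)\ge\delta$ almost surely and uniformly in $u$ and $t$.

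The key step is to choose the coupling between $I$ and $J$, which (\ref{ZRP:curvature}) leaves entirely to our discretion, so as to force coalescence at an exponential rate $\delta$. I would introduce a rate-$\delta$ Poisson process $N$ independent of the environment, with arrival times $(\tau_k)_{k\ge 1}$ and associated i.i.d.\ samples $(\xi_k)_{k\ge 1}\sim\nu$, and declare that at each ring $\tau_k$ both walks jump \emph{simultaneously} to the common site $\xi_k$. Superimposed on this synchronized mechanism, each walk performs additional independent jumps to $\nu$-distributed targets at the residual rates $\rho_{I_t}(t)-\delta\ge 0$ and $\rho_{J_t}(t)-\delta\ge 0$, respectively, using independent Poisson driving. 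A direct rate check confirms that the marginals of this construction coincide with the time-inhomogeneous walks prescribed in Lemma \ref{lm:ZRP}, since the total rate to jump from $u$ to $v$ is $\delta\nu_v+(\rho_u(t)-\delta)\nu_v=\rho_u(t)\nu_v$.

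Under this coupling, one has $I_{\tau_1}=J_{\tau_1}=\xi_1$ deterministically, so the meeting time satisfies $T\le\tau_1$, and therefore $\PP_{z,i,j}(T>t)\le \PP(\tau_1>t)=e^{-\delta t}$ uniformly in $(z,i,j)$. Inserting this into (\ref{ZRP:curvature}) gives $1-\cc(P_t)\le e^{-\delta t}$, and combining with the time-varying entropic estimate (\ref{timevarying}) applied to $P_t$ (whose non-negative sectional curvature is guaranteed by Lemma \ref{lm:ZRP}) yields the MLSI with constant $\delta$. The main conceptual point, rather than a technical obstacle, is the essential role of the mean-field identity $G_{uv}=\nu_v$: it is precisely because the destination law does not depend on the source that the rate-$\delta$ synchronized component can be carved out \emph{uniformly in the current positions of the walks} and made to coalesce them in a single step. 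For a general geometry $G$, this exact decomposition would fail and only a weaker, environment-dependent coalescence bound would be available.
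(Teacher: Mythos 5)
Your proposal is correct and takes essentially the same route as the paper: under the mean-field hypothesis, both conditional walks jump to each $v$ at rate at least $\delta\nu_v$, so one synchronizes a rate-$\delta$ component to force coalescence at the first arrival of an independent rate-$\delta$ Poisson clock and feeds this into (\ref{ZRP:curvature}) and Theorem~\ref{th:main}. The paper states this coupling in one line (``their coalescence time $T$ is stochastically dominated by an Exponential variable with rate $\delta$''); your explicit superposition construction with residual rates $\rho_u(t)-\delta$ is precisely the standard realization of that claim.
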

\begin{proof}The mean-field condition (\ref{def:MFZRP}) ensures that, conditionally on $Z$, the random walks $I$ and $J$ jump to any given state $j\in[n]$ at a rate at least $\delta\nu_j$, regardless of their current position. Thus, they can be coupled so that their coalescence time $T$ is stochastically dominated by an Exponential  variable with rate $\delta$, and applying Theorem \ref{th:main} to $P_t$ concludes the proof.
\end{proof}
This result provides a final answer to a natural  question that has been the subject of several works \cite{MR2548501,MR2322692,MR3513606,MR4332696,conforti2022probabilistic}.
We emphasize that all prior lower bounds on the MLSI constant of the mean-field ZRP involved an additional dependency on the maximum rate increment 
\begin{eqnarray*}
\Delta & := & \max_{i\in[n],k\in[m]}\{r_{i}(k+1)-r_i(k)\},
\end{eqnarray*} and were vanishing in the $\Delta\to\infty$ limit. Let us also note that the applicability of (\ref{ZRP:curvature}) is by no means restricted to the mean-field case. As a concrete example, consider the case where $G$ is the transition matrix of simple random walk on the $n-$segment. Then, letting the two random walks $I,J$ evolve independently (conditionally on $Z$) until coalescence  ensures that their order is preserved, so that they must have coalesced by the time at which the lowest one hits $n$. Since this takes time $O_\PP(n^2/\delta)$, we deduce that the entropy decay occurs on the time-scale $O(n^2/\delta)$, which is sharp. Finally, we mention that a similar argument as the one used in Corollary \ref{co:ZRP} applies to  the heterogeneous Bernoulli-Laplace model, allowing us to  get rid of the dependency on the maximum rate in \cite[Theorem 5.1]{MR2548501}.
%

\bibliographystyle{plain}
\bibliography{draft}

\begin{thebibliography}{10}

\bibitem{MR4164852}
Gil Alon and Gady Kozma.
\newblock Comparing with octopi.
\newblock {\em Ann. Inst. Henri Poincar\'{e} Probab. Stat.}, 56(4):2672--2685,
  2020.

\bibitem{alon2023aldous}
Gil Alon, Gady Kozma, and Doron Puder.
\newblock On the {A}ldous-{C}aputo spectral gap conjecture for hypergraphs.
\newblock {\em arXiv preprint arXiv:2311.02505}, 2023.

\bibitem{bakry2006diffusions}
Dominique Bakry and Michel {\'E}mery.
\newblock Diffusions hypercontractives.
\newblock In {\em S{\'e}minaire de Probabilit{\'e}s XIX 1983/84: Proceedings},
  pages 177--206. Springer, 2006.

\bibitem{bauer2015li}
Frank Bauer, Paul Horn, Yong Lin, Gabor Lippner, Dan Mangoubi, Shing-Tung Yau,
  et~al.
\newblock {L}i-{Y}au inequality on graphs.
\newblock {\em Journal of Differential Geometry}, 99(3):359--405, 2015.

\bibitem{BB19}
R.~Bauerschmidt and T.~Bodineau.
\newblock A very simple proof of the {LSI} for high temperature spin systems.
\newblock {\em Journal of Functional Analysis}, 276(8):2582--2588, 2019.

\bibitem{MR3936154}
Nathana\"{e}l Berestycki and Bat\i{} \c{S}eng\"{u}l.
\newblock Cutoff for conjugacy-invariant random walks on the permutation group.
\newblock {\em Probab. Theory Related Fields}, 173(3-4):1197--1241, 2019.

\bibitem{MR4415182}
Antonio Blanca, Pietro Caputo, Zongchen Chen, Daniel Parisi, Daniel
  \v{S}tefankovi\v{c}, and Eric Vigoda.
\newblock On mixing of {M}arkov chains: coupling, spectral independence, and
  entropy factorization.
\newblock In {\em Proceedings of the 2022 {A}nnual {ACM}-{SIAM} {S}ymposium on
  {D}iscrete {A}lgorithms ({SODA})}, pages 3670--3692. SIAM, Philadelphia, PA,
  2022.

\bibitem{MR2120475}
Sergey Bobkov and Prasad Tetali.
\newblock Modified log-{S}obolev inequalities, mixing and hypercontractivity.
\newblock In {\em Proceedings of the {T}hirty-{F}ifth {A}nnual {ACM}
  {S}ymposium on {T}heory of {C}omputing}, pages 287--296. ACM, New York, 2003.

\bibitem{MR2316551}
Magnus Bordewich and Martin Dyer.
\newblock Path coupling without contraction.
\newblock {\em J. Discrete Algorithms}, 5(2):280--292, 2007.

\bibitem{bristiel2022entropy}
Alexandre Bristiel and Pietro Caputo.
\newblock Entropy inequalities for random walks and permutations.
\newblock {\em arXiv 2109.06009. Annales de l'I.H.P. (to appear)}.

\bibitem{646111}
R.~Bubley and M.~Dyer.
\newblock Path coupling: A technique for proving rapid mixing in markov chains.
\newblock In {\em Proceedings 38th Annual Symposium on Foundations of Computer
  Science}, pages 223--231, 1997.

\bibitem{entropy}
Pietro Caputo.
\newblock Lecture notes on entropy and {M}arkov chains.
\newblock {\em Available at:
  http://www.mat.uniroma3.it/users/caputo/entropy.pdf}, 2022.

\bibitem{MR2548501}
Pietro Caputo, Paolo Dai~Pra, and Gustavo Posta.
\newblock Convex entropy decay via the {B}ochner-{B}akry-{E}mery approach.
\newblock {\em Ann. Inst. Henri Poincar\'{e} Probab. Stat.}, 45(3):734--753,
  2009.

\bibitem{MR2629990}
Pietro Caputo, Thomas~M. Liggett, and Thomas Richthammer.
\newblock Proof of {A}ldous' spectral gap conjecture.
\newblock {\em J. Amer. Math. Soc.}, 23(3):831--851, 2010.

\bibitem{MR3434252}
Pietro Caputo, Georg Menz, and Prasad Tetali.
\newblock Approximate tensorization of entropy at high temperature.
\newblock {\em Ann. Fac. Sci. Toulouse Math. (6)}, 24(4):691--716, 2015.

\bibitem{MR2322692}
Pietro Caputo and Gustavo Posta.
\newblock Entropy dissipation estimates in a zero-range dynamics.
\newblock {\em Probab. Theory Related Fields}, 139(1-2):65--87, 2007.

\bibitem{Cesi}
Filippo Cesi.
\newblock Quasi-factorization of the entropy and logarithmic {S}obolev
  inequalities for {G}ibbs random fields.
\newblock {\em Probab. Theory Related Fields}, 120(4):569--584, 2001.

\bibitem{MR3413687}
Filippo Cesi.
\newblock A few remarks on the octopus inequality and {A}ldous' spectral gap
  conjecture.
\newblock {\em Comm. Algebra}, 44(1):279--302, 2016.

\bibitem{chen2022localization}
Yuansi Chen and Ronen Eldan.
\newblock Localization schemes: A framework for proving mixing bounds for
  markov chains.
\newblock In {\em 2022 IEEE 63rd Annual Symposium on Foundations of Computer
  Science (FOCS)}, pages 110--122. IEEE, 2022.

\bibitem{conforti2022probabilistic}
Giovanni Conforti.
\newblock A probabilistic approach to convex ($\phi$)-entropy decay for markov
  chains.
\newblock {\em The Annals of Applied Probability}, 32(2):932--973, 2022.

\bibitem{connor2019mixing}
Stephen~Bryan Connor and Richard Pymar.
\newblock Mixing times for exclusion processes on hypergraphs.
\newblock {\em Electronic Journal of Probability}, 2019.

\bibitem{devriendt2022discrete}
Karel Devriendt and Renaud Lambiotte.
\newblock Discrete curvature on graphs from the effective resistance.
\newblock {\em Journal of Physics: Complexity}, 3(2):025008, 2022.

\bibitem{MR2476411}
Persi Diaconis.
\newblock The {M}arkov chain {M}onte {C}arlo revolution.
\newblock {\em Bull. Amer. Math. Soc. (N.S.)}, 46(2):179--205, 2009.

\bibitem{diaconis1981generating}
Persi Diaconis and Mehrdad Shahshahani.
\newblock Generating a random permutation with random transpositions.
\newblock {\em Zeitschrift f{\"u}r Wahrscheinlichkeitstheorie und verwandte
  Gebiete}, 57(2):159--179, 1981.

\bibitem{dieker2010interlacings}
AB~Dieker.
\newblock Interlacings for random walks on weighted graphs and the interchange
  process.
\newblock {\em SIAM Journal on Discrete Mathematics}, 24(1):191--206, 2010.

\bibitem{Dier2021}
Dominik Dier, Moritz Kassmann, and Rico Zacher.
\newblock Discrete versions of the {L}i-{Y}au gradient estimate.
\newblock {\em Annali Scuola Normale Superiore - Classe di Scienze}, page
  691–744, June 2021.

\bibitem{dobrushin1985constructive}
Roland~Lvovich Dobrushin and Senya~B Shlosman.
\newblock Constructive criterion for the uniqueness of {G}ibbs field.
\newblock In {\em Statistical Physics and Dynamical Systems: Rigorous Results},
  pages 347--370. Springer, 1985.

\bibitem{eldan2017transport}
Ronen Eldan, James~R Lee, and Joseph Lehec.
\newblock {Transport-entropy inequalities and curvature in discrete-space
  Markov chains}.
\newblock In {\em A Journey Through Discrete Mathematics}, pages 391--406.
  Springer, 2017.

\bibitem{elworthy1991manifolds}
KD~Elworthy.
\newblock Manifolds and graphs with mostly positive curvatures.
\newblock In {\em Stochastic Analysis and Applications: Proceedings of the 1989
  Lisbon Conference}, pages 96--110. Springer, 1991.

\bibitem{erbar2018poincare}
Matthias Erbar and Max Fathi.
\newblock {Poincar{\'e}, modified logarithmic {S}obolev and isoperimetric
  inequalities for Markov chains with non-negative {R}icci curvature}.
\newblock {\em Journal of Functional Analysis}, 274(11):3056--3089, 2018.

\bibitem{erbar2019entropic}
Matthias Erbar, Max Fathi, and Andr{\'e} Schlichting.
\newblock Entropic curvature and convergence to equilibrium for mean-field
  dynamics on discrete spaces.
\newblock {\em arXiv preprint arXiv:1908.03397}, 2019.

\bibitem{erbar2017ricci}
Matthias Erbar, Christopher Henderson, Georg Menz, and Prasad Tetali.
\newblock Ricci curvature bounds for weakly interacting markov chains.
\newblock {\em Electron. J. Probab}, 22(40):1--23, 2017.

\bibitem{erbar2012ricci}
Matthias Erbar and Jan Maas.
\newblock Ricci curvature of finite {M}arkov chains via convexity of the
  entropy.
\newblock {\em Archive for Rational Mechanics and Analysis}, pages 1--42, 2012.

\bibitem{MR3513606}
Max Fathi and Jan Maas.
\newblock Entropic {R}icci curvature bounds for discrete interacting systems.
\newblock {\em Ann. Appl. Probab.}, 26(3):1774--1806, 2016.

\bibitem{forman2003bochner}
Robin Forman.
\newblock Bochner's method for cell complexes and combinatorial {R}icci
  curvature.
\newblock {\em Discrete \& Computational Geometry}, 29:323--374, 2003.

\bibitem{gao2003exponential}
Fuqing Gao and Jeremy Quastel.
\newblock Exponential decay of entropy in the random transposition and
  {B}ernoulli-{L}aplace models.
\newblock {\em The Annals of Applied Probability}, 13(4):1591--1600, 2003.

\bibitem{goel2004modified}
Sharad Goel.
\newblock Modified logarithmic {S}obolev inequalities for some models of random
  walk.
\newblock {\em Stochastic processes and their applications}, 114(1):51--79,
  2004.

\bibitem{MR3363437}
W.~K. Hastings.
\newblock Monte {C}arlo sampling methods using {M}arkov chains and their
  applications.
\newblock {\em Biometrika}, 57(1):97--109, 1970.

\bibitem{hermon2018characterization}
Jonathan Hermon and Yuval Peres.
\newblock A characterization of ${L}^2$ mixing and hypercontractivity via
  hitting times and maximal inequalities.
\newblock {\em Probability Theory and Related Fields}, 170(3-4):769--800, 2018.

\bibitem{MR4332696}
Jonathan Hermon and Justin Salez.
\newblock Entropy dissipation estimates for inhomogeneous zero-range processes.
\newblock {\em Ann. Appl. Probab.}, 31(5):2275--2283, 2021.

\bibitem{MR4254474}
Jonathan Hermon and Justin Salez.
\newblock The interchange process on high-dimensional products.
\newblock {\em Ann. Appl. Probab.}, 31(1):84--98, 2021.

\bibitem{jost2021characterizations}
J{\"u}rgen Jost and Florentin M{\"u}nch.
\newblock Characterizations of {F}orman curvature.
\newblock {\em arXiv preprint arXiv:2110.04554}, 2021.

\bibitem{jost2019Liouville}
J\"urgen Jost, Florentin M\"unch, and Christian Rose.
\newblock {Liouville property and non-negative {O}llivier curvature on graphs}.
\newblock {\em arXiv preprint arXiv:1903.10796}, 2019.

\bibitem{joulin2007poisson}
Ald{\'e}ric Joulin.
\newblock Poisson-type deviation inequalities for curved continuous-time
  {M}arkov chains.
\newblock {\em Bernoulli}, 13(3):782--798, 2007.

\bibitem{MR2683634}
Ald\'{e}ric Joulin and Yann Ollivier.
\newblock Curvature, concentration and error estimates for {M}arkov chain
  {M}onte {C}arlo.
\newblock {\em Ann. Probab.}, 38(6):2418--2442, 2010.

\bibitem{MR4149342}
Mark Kempton, Gabor Lippner, and Florentin M\"{u}nch.
\newblock Large scale {R}icci curvature on graphs.
\newblock {\em Calc. Var. Partial Differential Equations}, 59(5):Paper No. 166,
  17, 2020.

\bibitem{MR3726904}
David~A. Levin and Yuval Peres.
\newblock {\em Markov chains and mixing times}.
\newblock American Mathematical Society, Providence, RI, 2017.
\newblock Second edition of [ MR2466937], With contributions by Elizabeth L.
  Wilmer, With a chapter on ``Coupling from the past'' by James G. Propp and
  David B. Wilson.

\bibitem{lin2010ricci}
Yong Lin and Shing-Tung Yau.
\newblock Ricci curvature and eigenvalue estimate on locally finite graphs.
\newblock {\em Mathematical research letters}, 17(2):343--356, 2010.

\bibitem{liu2021coupling}
Kuikui Liu.
\newblock From coupling to spectral independence and blackbox comparison with
  the down-up walk.
\newblock In {\em Approximation, Randomization, and Combinatorial Optimization.
  Algorithms and Techniques (APPROX/RANDOM 2021)}. Schloss
  Dagstuhl-Leibniz-Zentrum f{\"u}r Informatik, 2021.

\bibitem{Lu-Yau}
Sheng~Lin Lu and Horng-Tzer Yau.
\newblock Spectral gap and logarithmic {S}obolev inequality for {K}awasaki and
  {G}lauber dynamics.
\newblock {\em Comm. Math. Phys.}, 156(2):399--433, 1993.

\bibitem{MR1746301}
Fabio Martinelli.
\newblock Lectures on {G}lauber dynamics for discrete spin models.
\newblock In {\em Lectures on probability theory and statistics
  ({S}aint-{F}lour, 1997)}, volume 1717 of {\em Lecture Notes in Math.}, pages
  93--191. Springer, Berlin, 1999.

\bibitem{MR4015662}
Katalin Marton.
\newblock Logarithmic {S}obolev inequalities in discrete product spaces.
\newblock {\em Combin. Probab. Comput.}, 28(6):919--935, 2019.

\bibitem{doi:10.1063/1.1699114}
Nicholas Metropolis, Arianna~W. Rosenbluth, Marshall~N. Rosenbluth, Augusta~H.
  Teller, and Edward Teller.
\newblock Equation of state calculations by fast computing machines.
\newblock {\em The Journal of Chemical Physics}, 21(6):1087--1092, 1953.

\bibitem{munch2014li}
Florentin M{\"u}nch.
\newblock {Li--Yau inequality on finite graphs via non-linear curvature
  dimension conditions}.
\newblock {\em Journal de Math{\'e}matiques Pures et Appliqu{\'e}es},
  120:130--164, 2018.

\bibitem{munch2019non}
Florentin M{\"u}nch.
\newblock {Non-negative {O}llivier curvature on graphs, reverse Poincar{\'e}
  inequality, Buser inequality, Liouville property, Harnack inequality and
  eigenvalue estimates}.
\newblock {\em arXiv preprint arXiv:1907.13514}, 2019.

\bibitem{munch2023ollivier}
Florentin M{\"u}nch.
\newblock Ollivier curvature, isoperimetry, concentration, and log-{S}obolev
  inequalitiy, 2023.

\bibitem{Muench2023}
Florentin M{\"u}nch and Justin Salez.
\newblock Mixing time and expansion of non-negatively curved markov chains.
\newblock {\em Journal de l'{\'E}cole polytechnique -- Math{\'e}matiques},
  10:575--590, 2023.

\bibitem{munch2024intertwining}
Florentin M{\"u}nch, Melchior Wirth, and Haonan Zhang.
\newblock Intertwining curvature bounds for graphs and quantum markov
  semigroups.
\newblock {\em arXiv preprint arXiv:2401.05179}, 2024.

\bibitem{oliveira2013mixing}
Roberto~Imbuzeiro Oliveira.
\newblock Mixing of the symmetric exclusion processes in terms of the
  corresponding single-particle random walk.
\newblock {\em The Annals of Probability}, 41(2):871--913, 2013.

\bibitem{ollivier2009ricci}
Yann Ollivier.
\newblock Ricci curvature of {M}arkov chains on metric spaces.
\newblock {\em Journal of Functional Analysis}, 256(3):810--864, 2009.

\bibitem{ollivier2010survey}
Yann Ollivier.
\newblock A survey of {R}icci curvature for metric spaces and markov chains.
\newblock In {\em Probabilistic approach to geometry}, volume~57, pages
  343--382. Mathematical Society of Japan, 2010.

\bibitem{pedrotti2023contractive}
Francesco Pedrotti.
\newblock Contractive coupling rates and curvature lower bounds for {M}arkov
  chains, 2023.

\bibitem{quattropani2023mixing}
Matteo Quattropani and Federico Sau.
\newblock Mixing of the averaging process and its discrete dual on
  finite-dimensional geometries.
\newblock {\em The Annals of Applied Probability}, 33(2):1136--1171, 2023.

\bibitem{rapaport2023criteria}
Martin Rapaport and Paul-Marie Samson.
\newblock Criteria for entropic curvature on graph spaces.
\newblock {\em arXiv preprint arXiv:2303.15874}, 2023.

\bibitem{salez2021sparse}
Justin Salez.
\newblock Sparse expanders have negative curvature.
\newblock {\em Geom. Funct. Anal.}, 32(6):1486--1513, 2022.

\bibitem{salez2023spectral}
Justin Salez.
\newblock Spectral gap and curvature of monotone markov chains, 2023.

\bibitem{MR4546624}
Justin Salez.
\newblock Universality of cutoff for exclusion with reservoirs.
\newblock {\em Ann. Probab.}, 51(2):478--494, 2023.

\bibitem{salez2021cutoff}
Justin Salez.
\newblock Cutoff for non-negatively curved {M}arkov chains.
\newblock {\em J. Eur. Math. Soc.}, to appear.

\bibitem{schmuckenschlager1998curvature}
Michael Schmuckenschl{\"a}ger.
\newblock Curvature of nonlocal markov generators.
\newblock {\em Convex geometric analysis (Berkeley, CA, 1996)}, 34:189--197,
  1998.

\bibitem{Spitzer}
Frank Spitzer.
\newblock Interaction of {M}arkov processes.
\newblock {\em Advances in Math.}, 5:246--290 (1970), 1970.

\bibitem{steinerberger2023curvature}
Stefan Steinerberger.
\newblock Curvature on graphs via equilibrium measures.
\newblock {\em Journal of Graph Theory}, 103(3):415--436, 2023.

\bibitem{tee2021enhanced}
Philip Tee and CA~Trugenberger.
\newblock Enhanced {F}orman curvature and its relation to {O}llivier curvature.
\newblock {\em Europhysics Letters}, 133(6):60006, 2021.

\bibitem{MR1964483}
C\'{e}dric Villani.
\newblock {\em Topics in optimal transportation}, volume~58 of {\em Graduate
  Studies in Mathematics}.
\newblock American Mathematical Society, Providence, RI, 2003.

\bibitem{weber2021entropy}
Frederic Weber and Rico Zacher.
\newblock The entropy method under curvature-dimension conditions in the spirit
  of {B}akry-{\'e}mery in the discrete setting of markov chains.
\newblock {\em Journal of Functional Analysis}, 281(5):109061, 2021.

\bibitem{MR4688313}
Xi~Xu, Wang Shen, and Linfeng Wang.
\newblock The {${\rm CD}_p$} {C}urvature {C}ondition on a {G}raph.
\newblock {\em Front. Math.}, 19(1):181--192, 2024.

\bibitem{Zeg}
Boguslaw Zegarlinski.
\newblock Dobrushin uniqueness theorem and logarithmic {S}obolev inequalities.
\newblock {\em J. Funct. Anal.}, 105(1):77--111, 1992.

\end{thebibliography}
\end{document}